\newtheorem{theorem}{Theorem}
\newtheorem*{theorem*}{Theorem}
\newtheorem{thmalpha}{Theorem}
\newtheorem{lemma}{Lemma}
\newtheorem{proposition}{Proposition}
\newtheorem*{proposition*}{Proposition}
\newtheorem{corollary}{Corollary}
\theoremstyle{definition}
\newtheorem{definition}{Definition}
\theoremstyle{remark}
\newtheorem{remark}{Remark}
\newtheorem*{remark*}{Remark}
\newtheorem*{remarkn*}{Remark on Notation}
\newtheorem*{note*}{Note}
\begin{document}

\title[Positive Matrices via the Kaczmarz Algorithm]{Positive Matrices in the Hardy Space with Prescribed Boundary Representations via the Kaczmarz Algorithm}
\author{John E. Herr}
\address[John E. Herr and Eric S. Weber]{Department of Mathematics, Iowa State University, 396 Carver Hall, Ames, IA 50011}
\email{jherr@iastate.edu, esweber@iastate.edu}
\author{Palle E. T. Jorgensen}
\address[Palle E.T. Jorgensen]{Department of Mathematics, University of Iowa, Iowa City, IA 52242}
\email{palle-jorgensen@uiowa.edu}
\author{Eric S. Weber}
\subjclass[2010]{Primary: 46E22, 30B30; Secondary 42B05, 28A25}
\date{\today}
\begin{abstract}
For a singular probability measure $\mu$ on the circle, we show the existence of positive matrices on the unit disc which admit a boundary representation on the unit circle with respect to $\mu$.  These positive matrices are constructed in several different ways using the Kaczmarz algorithm.  Some of these positive matrices correspond to the projection of the Szeg\H{o} kernel on the disc to certain subspaces of the Hardy space corresponding to the normalized Cauchy transform of $\mu$.  Other positive matrices are obtained which correspond to subspaces of the Hardy space after a renormalization, and so are not projections of the Szeg\H{o} kernel.  We show that these positive matrices are a generalization of a spectrum or Fourier frame for $\mu$, and the existence of such a positive matrix does not require $\mu$ to be spectral.
\end{abstract}
\maketitle

\section{Introduction}

The boundary value problems we consider here are motivated by two cases considered earlier. One is Fatou's theorem for the Hardy space $H^2$ on the disk $\mathbb{D}$, yielding an isomorphism between $H^2$ on the one hand, and $L^2$ of the boundary circle $\mathbb{T}$ on the other, with the $L^2$ referring to the Haar (normalized Lebesgue) measure on $\mathbb{T}$. In particular, this theorem shows that every $f$ in $H^2$ has a non-tangential limit a.e. with respect to Lebesgue measure on $\mathbb{T}$, and that the $L^2$ norm of the limit function agrees with the $H^2$ norm of $f$. Now because of a more general duality theory, it is natural to ask for boundary representations specified by certain lacunary subspaces in $H^2$. On account of work by the second named author and co-authors (e.g. \cite{JP98,DJ11}, see also \cite{Str00}), it is known that there are families of singular measures $\mu$ on the circle that admit a Fourier duality corresponding to associated sets of lacunary Fourier frequencies. (Such measures $\mu$ are said to be spectral.)  The latter sets of frequencies in turn index certain closed subspaces of $H^2$ that can be shown to have boundary representations, referring now instead to $L^2(\mu)$ boundary values. By ``lacunary'' we refer to Fourier series having asymptotically an infinite sequence of gaps between non-zero coefficients, the successive gaps growing at a geometric rate.

The boundary representations we present here go beyond that of spectral measures from spectral pairs.  Our results in turn are based on a new kernel analysis, and we open below by recalling some facts from the theory of reproducing kernels that will be used inside the paper. We make use of frames and of the structure theorem of Wold for isometries in Hilbert spaces. Our frame expansions are constructive in that we generate them from the Kaczmarz algorithm, a procedure originally used to solve systems of linear equations.

The classical Hardy space $H^2$ consists of those holomorphic functions $f$ defined on $\mathbb{D}$ satisfying
\begin{equation}\label{Hardy}\lVert f\rVert^2_{H^2}:=\sup_{0<r<1}\int_{0}^{1}\lvert f(re^{2\pi ix})\rvert^2\,dx<\infty.\end{equation}
It is well-known that an equivalent description of $H^2$ is as the space of holomorphic functions on $\mathbb{D}$ with square-summable coefficients:
$$H^2=\left\{\sum_{n=0}^{\infty}c_nz^n~\middle|~\sum_{n=0}^{\infty}\lvert c_n\rvert^2<\infty\right\},$$
where the norm is then equivalently given by
$$\lVert f\rVert^2_{H^2}=\sum_{n=0}^{\infty}\lvert c_n\rvert^2.$$
In addition, for each $f\in H^2$, there exists a (unique) function $f^\ast\in L^2(\mathbb{T})$, which we shall call the Lebesgue boundary function of $f$, such that
\begin{equation}\label{LebBoundary}\lim_{r\rightarrow1^{-}}\int_{0}^{1}\lvert f(re^{2\pi ix})-f^\ast(e^{2\pi ix})\rvert^2\,dx=0.\end{equation}
In fact, $\lim_{r\rightarrow1^-}f(re^{2\pi i x})=f^\ast(e^{2\pi ix})$ pointwise for almost every $x$. If $f(z)=\sum_{n=0}^{\infty}a_n z^n$ and $g(z)=\sum_{n=0}^{\infty}b_n z^n$ are two members of $H^2$, the inner product of $f$ and $g$ in $H^2$ can be described in two ways:
\begin{equation*}
{\langle f,g\rangle}_{H^2}=\sum_{n=0}^{\infty}a_n\overline{b_n}=\int_{0}^{1}f^\ast(e^{2\pi ix})\overline{g^\ast(e^{2\pi ix})}\,dx.\end{equation*}
Because the point-evaluation functionals on the Hardy space are bounded, the Hardy space is a reproducing kernel Hilbert space. Its kernel is the classical Szeg\H{o} kernel $k(z,w)=:k_z$, defined by
$$k_z(w):=\frac{1}{1-\overline{z}w}.$$
We then have
\begin{equation*}f(z)=\langle f,k_z\rangle_{H^2}=\int_{0}^{1}f^\ast(e^{2\pi ix})\overline{k_z^\ast(e^{2\pi ix})}\,dx\end{equation*}
for all $f\in H^2$. In particular,
\begin{equation}\label{szegorep}k_z(w):=\int_{0}^{1}k_z^\ast(e^{2\pi ix})\overline{k_w^\ast(e^{2\pi ix})}\,dx.\end{equation} Equation $\eqref{szegorep}$ shows that the Szeg\H{o} kernel reproduces itself with respect to what is, by some definition, its boundary. The measure on the circle used to define $k_z^\ast$ in \eqref{LebBoundary} is Lebesgue measure, as is the measure in $\eqref{szegorep}$. The intent of this paper is to show that among the functions in the Hardy space, there are a host of other kernels that reproduce with respect to their boundaries. However, these boundary functions will not be taken with respect to Lebesgue measure, but with respect to a given singular measure, and the integration of these boundary functions will also be done with respect to this measure. We consider two main questions: which positive matrices does the Hardy space contain that reproduce themselves by boundary functions with respect to a given measure, and with respect to which measures will a positive matrix reproduce itself by boundary functions?

\begin{remark}In this paper, we will be dealing with measures $\mu$ on the unit circle. The unit circle $\mathbb{T}:=\{z\in\mathbb{C}:\lvert z\rvert=1\}$ and its topology shall be identified with $[0,1)$ via the relation $\xi=e^{2\pi ix}$ for $\xi\in\mathbb{T}$ and $x\in[0,1)$. We will regard the measures $\mu$ as being supported on $[0,1)$. A function $f(\xi)$ defined on $\mathbb{T}$ (for example, a boundary function) may be regarded as being in $L^2(\mu)$ if $f(e^{2\pi ix})\in L^2(\mu)$. For aesthetics, the inner product (norm) in $L^2(\mu)$ will be denoted $\langle\cdot,\cdot\rangle_{\mu}$ ($\lVert\cdot\rVert_{\mu}$) rather than $\langle\cdot,\cdot\rangle_{L^2(\mu)}$ ($\lVert\cdot\rVert_{L^2(\mu)}$). The subscript will be suppressed where context suffices.  A measure $\mu$ will be called \emph{singular} if it is a Borel measure that is singular with respect to Lebesgue measure.
\end{remark}

\begin{definition}If $\mu$ is a finite Borel measure on $[0,1)$ and $f(z)$ is an analytic function on $\mathbb{D}$, we say that $f^\ast\in L^2(\mu)$ is an $L^2(\mu)$-boundary function of $f$ if
\begin{equation*}\lim_{r\rightarrow1^-}\left\lVert f^\ast (x)-f(re^{2\pi ix})\right\rVert_{\mu}=0.\end{equation*}
If a function possesses an $L^2(\mu)$-boundary function, then clearly that boundary function is unique. The $L^2(\mu)$-boundary function of a function $f:\mathbb{D}\rightarrow\mathbb{C}$ shall be denoted $f^\ast_\mu$, but we omit the subscript when context precludes ambiguity.
\end{definition}

\begin{definition}A positive matrix (in the sense of E.~H.~Moore) on a domain $E$ is a function $K(z,w):E\times E\rightarrow\mathbb{C}$ such that for all finite sequences $\zeta_1,\zeta_2,\ldots,\zeta_n\in E$, the matrix
$$(K(\zeta_j,\zeta_i))_{ij}$$
is positive semidefinite. We will usually write $K_z(w)$ instead of $K(z,w)$, to emphasize that each fixed $z$ yields a function in $w$. Given a positive matrix $K_z(w)$, we will use the bare notation $K$ to refer to the set $\{K_z:z\in E\}$ of functions from $E$ to $\mathbb{C}$ comprising it, though sometimes we will use $K$ to refer to the positive matrix itself as a function from $E\times E$ to $\mathbb{C}$. 
\end{definition}

Our interest is in positive matrices on $E=\mathbb{D}$, and more specifically those residing in $H^2(\mathbb{D})$. Recall that the classical Hardy space is a reproducing kernel Hilbert space. We therefore desire to find subspaces of the Hardy space that not only are Hilbert spaces with respect to the $L^2(\mu)$-boundary norm, but are in fact reproducing kernel Hilbert spaces with respect to this norm. The classical Moore-Aronszajn Theorem connects positive matrices to reproducing kernel Hilbert spaces \cite{Aron50a}:

\begin{theorem*}[Moore-Aronszajn]To every positive matrix $K_z(w)$ on a domain $E$ there corresponds one and only one class of functions on $E$ with a uniquely determined quadratic form in it, forming a Hilbert space and admitting $K_z(w)$ as a reproducing kernel. This class of functions is generated by all functions of the form $\sum_{k=1}^{n}\xi_k K_{z_k}(w)$, with norm given by
$$\left\lVert\sum_{k=1}^{n}\xi_k K_{z_k}(w)\right\rVert^2=\sum_{i,j=1}^{n}K_{z_j}(z_i)\overline{\xi_i}\xi_j.$$
\end{theorem*}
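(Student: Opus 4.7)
The plan is to construct the Hilbert space by completing a suitable pre-Hilbert space generated by the $K_z$, and then identify the abstract completion concretely with functions on $E$. First I would let $H_0$ denote the linear subspace of $\mathbb{C}^E$ spanned by $\{K_z : z \in E\}$, and define a sesquilinear form on $H_0$ by extending the rule $\langle K_z, K_w\rangle := K(z,w)$ linearly in the first variable and conjugate-linearly in the second. With this definition the expression in the theorem is exactly $\langle f, f\rangle$ when $f = \sum_k \xi_k K_{z_k}$, and the positive matrix hypothesis delivers $\langle f, f\rangle \geq 0$. A one-line calculation gives $\langle f, K_w\rangle = f(w)$ for every $f \in H_0$, which simultaneously shows that the form is well defined (two representations of the same function pair identically against every $K_w$, hence against everything in $H_0$) and that $K$ already reproduces pointwise evaluation on $H_0$.

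To promote the nonnegative form to a genuine inner product I would invoke Cauchy-Schwarz in the form
\[|f(w)|^2 = |\langle f, K_w\rangle|^2 \leq \langle f, f\rangle \, K(w,w),\]
so that $\langle f, f\rangle = 0$ forces $f \equiv 0$ pointwise. Let $H$ be the abstract Hilbert completion of $H_0$. For each $w \in E$ the functional $f \mapsto f(w)$ extends by continuity to a bounded linear functional on $H$, so I can define $\hat f(w) := \langle \tilde f, K_w\rangle_H$ for $\tilde f \in H$. The same Cauchy-Schwarz bound guarantees that $\hat f$ is the pointwise limit of any Cauchy sequence representing $\tilde f$, and the map $\tilde f \mapsto \hat f$ is injective because $\hat f \equiv 0$ forces $\tilde f$ to be orthogonal to the dense set of $K_w$'s. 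This realizes $H$ as a Hilbert space of functions on $E$ whose reproducing kernel is $K$ and whose norm is given by the stated formula.

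For uniqueness, suppose $\mathcal{H}$ is another Hilbert space of functions on $E$ with reproducing kernel $K$. Then each $K_z$ lies in $\mathcal{H}$, the inner products of the generators are forced by $\langle K_z, K_w\rangle_{\mathcal{H}} = K_z(w) = K(z,w)$, and the linear span of $\{K_z\}$ is dense in $\mathcal{H}$ because any orthogonal $g$ would satisfy $g(z) = \langle g, K_z\rangle_{\mathcal{H}} = 0$. Thus $\mathcal{H}$ and $H$ contain isometrically isomorphic dense subspaces of functions that agree pointwise, so they coincide. The main subtlety I expect is the last identification step: ensuring that the abstract completion $H$ genuinely becomes a space of functions on $E$ rather than of Cauchy-equivalence classes, and that $H$-norm convergence implies pointwise convergence so that no information is lost in the passage $\tilde f \mapsto \hat f$. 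Both points fall out cleanly from the reproducing estimate above, but they are the only places where the construction is not purely formal.
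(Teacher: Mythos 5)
The paper does not prove this statement; it is quoted as the classical Moore--Aronszajn theorem with a citation to Aronszajn's 1950 paper, so there is no in-paper argument to compare against. Your proof is the standard and correct one: the sesquilinear form on the span of the $K_z$'s, well-definedness and positivity via the reproducing identity $\langle f,K_w\rangle=f(w)$ together with the positive-matrix hypothesis, definiteness and the pointwise-control estimate $\lvert f(w)\rvert^2\leq\langle f,f\rangle\,K(w,w)$ from Cauchy--Schwarz, realization of the abstract completion as a function space via $\hat f(w)=\langle\tilde f,K_w\rangle$, and uniqueness from density of the kernel functions. You also correctly flag the one genuinely non-formal point, namely that norm convergence must imply pointwise convergence so that the completion consists of honest functions; nothing further is needed.
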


Conversely, every reproducing kernel of a Hilbert space of functions on a common domain is a positive matrix. Let us then define two sets of interest:

\begin{definition}Let $\mu$ be a nonnegative Borel measure on $[0,1)$. We define $\mathcal{K}(\mu)$ to be the set of positive matrices $K$ on $\mathbb{D}$ such that for each fixed $z\in\mathbb{D}$, $K_z$ possesses an $L^2(\mu)$-boundary $K_z^\ast$, and $K_z(w)$ reproduces itself with respect to integration of these $L^2(\mu)$-boundaries, i.e.
$$K_z(w)=\int_{0}^{1}K_z^\ast(x)\overline{K_w^\ast(x)}\,d\mu(x)$$ for all $z,w\in\mathbb{D}$.\end{definition}

\begin{definition}Let $K$ be a positive matrix on $\mathbb{D}$. We define $\mathcal{M}(K)$ to be the set of nonnegative Borel measures $\mu$ on $[0,1)$ such that for each fixed $z\in\mathbb{D}$, $K_z$ possesses an $L^2(\mu)$-boundary $K_z^\ast$, and $K_z(w)$ reproduces itself with respect to integration of these $L^2(\mu)$-boundaries.\end{definition}

\begin{definition}A sequence $\{x_n\}_{n=0}^{\infty}$ in a Hilbert space $\mathbb{H}$ is called a frame \cite{DS52} if there exist positive constants $A$ and $B$ such that
\begin{equation}\label{framecond}A\lVert\phi\rVert^2\leq\sum_{n=0}^{\infty}\lvert\langle \phi,x_n\rangle\rvert^2\leq B\lVert\phi\rVert^2\end{equation}
for all $\phi\in\mathbb{H}$. If $\{x_n\}_{n=0}^{\infty}$ satisfies (possibly only) the right-hand inequality in \eqref{framecond}, it is called a Bessel sequence. If $A=B$, the frame is called tight, and if $A=B=1$, it is called a Parseval frame.\end{definition}

\begin{definition}Let $\{x_n\}_{n=0}^{\infty}$ be a frame in a Hilbert space $\mathbb{H}$. A frame $\{y_n\}_{n=0}^{\infty}$ in $\mathbb{H}$ is a dual frame of $\{x_n\}_{n=0}^{\infty}$ if \begin{equation}\label{dualframecond}\sum_{n=0}^{\infty}\langle \phi,x_n\rangle y_n=\phi\text{ for all }\phi\in\mathbb{H}.\end{equation} If \eqref{dualframecond} is satisfied, then it is necessarily also true that \begin{equation*}\sum_{n=0}^{\infty}\langle\phi,y_n\rangle x_n=\phi\text{ for all }\phi\in\mathbb{H}.\end{equation*} Thus, frame duality is symmetric. A given frame will generally have many dual frames, but every frame possesses a unique canonical dual frame. A Parseval frame is its own canonical dual.\end{definition}

The quaternary Cantor measure $\mu_4$ is the restriction of the $\frac{1}{2}$-dimensional Hausdorff measure to the quaternary Cantor set. Likewise, the ternary Cantor measure $\mu_3$ is the restriction of the $\frac{\ln(2)}{\ln(3)}$-dimensional Hausdorff measure to the ternary Cantor set. In \cite{JP98}, Jorgensen and Pedersen showed that the quaternary Cantor measure is spectral. That is, there exists a set $\Gamma \subset\mathbb{Z}$ such that the set of complex exponentials $\left\{e^{2\pi i\lambda x}\right\}_{\lambda \in \Gamma}$ is an orthonormal basis of $L^2(\mu_4)$. From this, Dutkay and Jorgensen \cite{DJ11} constructed a positive matrix $G_\Gamma$ inside $H^2$ that reproduces itself both in $H^2$ and with respect to $L^2(\mu_4)$-boundary integration. Thus $G_\Gamma\in\mathcal{K}(\mu_4)$.

In \cite{JP98}, it was also shown that $\mu_3$ is not spectral. Thus, it is not possible to construct a positive matrix for $\mu_3$ in the same way as for $\mu_{4}$. However, it is sufficient for $\mu_{3}$ to possess an exponential frame:
\begin{proposition*}
If there exists a sequence $\{n_{j}\}_{j=0}^{\infty}$ of nonnegative integers such that $\{ e^{2 \pi i n_{j} x} : j \geq 0\} \subset L^2(\mu)$ is a frame, then $\mathcal{K}(\mu)$ is nonempty.
\end{proposition*}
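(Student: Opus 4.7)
The plan is to construct an element of $\mathcal{K}(\mu)$ as the reproducing kernel of a suitable closed subspace of $H^2$, renormalized so that its inner product is the one inherited from $L^2(\mu)$ via boundary values.

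First I would set $H_\Gamma := \overline{\operatorname{span}\{z^{n_j} : j \geq 0\}} \subset H^2$; since $\{z^{n_j}\}$ is orthonormal in $H^2$, every $f \in H_\Gamma$ has the form $f(z) = \sum_j c_j z^{n_j}$ with $(c_j) \in \ell^2$. Define the boundary map $T : H_\Gamma \to L^2(\mu)$ by $T\bigl(\sum_j c_j z^{n_j}\bigr) := \sum_j c_j e^{2\pi i n_j x}$. The Bessel property of the exponential frame makes this series converge in $L^2(\mu)$ and renders $T$ bounded; a dominated-convergence estimate on $\lVert \sum_j c_j(r^{n_j}-1) e^{2\pi i n_j x} \rVert_\mu^2$ then shows that $T(f)$ is the $L^2(\mu)$-boundary $f^*$ in the sense of the paper. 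Invertibility of the frame operator is equivalent to surjectivity of the synthesis map $c \mapsto \sum_j c_j e^{2\pi i n_j x}$, so $T$ is surjective as well.

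Now let $\mathcal{N} := \ker T$ and $H^\mu := \mathcal{N}^\perp$, the orthogonal complement taken in $H^2$. Then $T|_{H^\mu} \colon H^\mu \to L^2(\mu)$ is a bounded bijection with bounded inverse by the open mapping theorem. Equip $H^\mu$ with the pulled-back inner product $\langle f, g\rangle^{\#}_\mu := \langle Tf, Tg\rangle_\mu$; this makes $H^\mu$ a Hilbert space isometrically isomorphic to $L^2(\mu)$, and its $H^2$-norm and new $\mu$-norm are equivalent. Point evaluations, which are bounded on $H^2$, therefore remain bounded on $(H^\mu, \langle\cdot,\cdot\rangle^{\#}_\mu)$, so this is a reproducing kernel Hilbert space; let $K^\mu$ be its kernel. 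By Moore-Aronszajn, $K^\mu$ is a positive matrix on $\mathbb{D}$; each $K^\mu_z \in H^\mu \subset H^2$ has $L^2(\mu)$-boundary $T(K^\mu_z)$; and the reproducing identity $f(z) = \int f^*(x) \overline{(K^\mu_z)^*(x)} \, d\mu(x)$ in $H^\mu$, applied to $f = K^\mu_w$ together with the Hermitian symmetry $K^\mu_z(w) = \overline{K^\mu_w(z)}$, yields exactly the reproducing property that places $K^\mu$ in $\mathcal{K}(\mu)$.

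The main obstacle is to resist the temptation of using $H_\Gamma$ itself with the $\mu$-boundary pairing as the renormalized space: in the overcomplete case (the frame is not a Riesz basis) this pairing is only a semi-inner product, so one must quotient out $\ker T$ and realize the quotient as $\mathcal{N}^\perp$ inside $H^2$ in order to keep the resulting kernel analytic and inside $H^2$. Once this identification is made, the verification reduces to standard frame theory plus the open mapping theorem.
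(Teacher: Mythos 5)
Your argument is correct, but it reaches the conclusion by a genuinely different route than the paper. The paper derives the Proposition as a special case of Theorem \ref{kernelthm}: from the frame $\{e_{n_j}\}$ one takes a dual frame $\{\tilde{g}_j\}$ in $L^2(\mu)$ and pads it with zeros, setting $h_{n_j}=\tilde{g}_j$ and $h_n=0$ for $n\notin\{n_j\}$; the resulting sequence is Bessel and dextrodual to $\{e_n\}$, and Theorem \ref{kernelthm} then produces the explicit positive matrix $K_z(w)=\sum_{m,n}\langle h_n,h_m\rangle_\mu\overline{z}^nw^m$, whose $L^2(\mu)$-boundary $\sum_n\overline{z}^nh_n$ is identified through Abel summability of the dextrodual expansion. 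You instead work on the analytic side: you form the lacunary subspace $H_\Gamma$, realize the frame's synthesis operator as a boundary map $T:H_\Gamma\to L^2(\mu)$, mod out $\ker T$, and transport the $L^2(\mu)$ inner product back to $(\ker T)^\perp$, letting Moore--Aronszajn supply the kernel abstractly. Your route is shorter and uses only standard frame theory plus the open mapping theorem, but it leans essentially on the lower frame bound (to invert $T$ off its kernel and to keep point evaluations bounded in the new norm), so it does not extend to the paper's more general hypothesis of a Bessel sequence dextrodual to a non-Bessel system $\{e_n\}$ --- which is exactly the generality the rest of the paper needs, since for singular $\mu$ the full exponential system is never a frame; the paper's version also hands you the kernel and its boundary function in closed form. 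Two small points to tighten: $H^\mu$ should be the orthogonal complement of $\ker T$ \emph{inside} $H_\Gamma$ (taken in all of $H^2$ it would contain $H_\Gamma^\perp$, where $T$ is undefined), and you should remark that $T$ is well defined because $\{z^{n_j}\}$ is orthonormal in $H^2$, so the coefficient sequence of each $f\in H_\Gamma$ is unique.
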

We will prove a generalization of this result in Theorem \ref{kernelthm}.  It is still unknown whether $\mu_3$ possesses an exponential frame. Despite this seeming impediment, in this paper we will show not only that $\mathcal{K}(\mu_3)$ is nonempty, but that it contains infinitely many members within $H^2$. In fact, we will show this for all singular probability measures on $[0,1)$.

\section{Kernels in $\mathcal{K}(\mu)$ That Are Also $H^2$ Kernels}
In our first step to show that for $\mu$ a singular probability measure $\mathcal{K}(\mu)$ has a rich variety of inhabitants, we consider when projections of the Szeg\H{o} kernel onto appropriate subspaces of $H^2$ will be elements in $\mathcal{K}(\mu)$.  For the measure $\mu$, there is a canonical subspace of $H^2$ identified with $\mu$--it is the image of $L^2(\mu)$ under the Normalized Cauchy transform, which also is a de Branges-Rovnyak space.  This subspace will give rise to many kernels in $\mathcal{K}(\mu)$.

\begin{definition}A function $b\in H^\infty(\mathbb{D})$ (the space of bounded holomorphic functions on $\mathbb{D}$) is said to be inner if the radial limits $b^\ast(e^{2\pi ix}):=\lim_{r\rightarrow 1^-}b(re^{2\pi ix})$ exist for almost all $x\in[0,1)$ with respect to Lebesgue measure and $\lvert b^\ast(e^{2\pi ix})\rvert=1$ for almost all $x$.\end{definition}

There is a one-to-one correspondence between the nonconstant inner functions $b$ and the finite nonnegative singular measures $\mu$ on $[0,1)$ given by the Herglotz representation:
\begin{equation}\label{innerpoisson}\text{Re}\left(\frac{1+b(z)}{1-b(z)}\right)=\int_{0}^{1}\frac{1-\lvert z\rvert^2}{\lvert e^{2\pi ix}-z\rvert^2}\,d\mu(x).\end{equation}
For a singular measure $\mu$ and an inner function $b$ related in this way, we will say that $\mu$ is the measure ``corresponding'' to $b$, or that $b$ is the inner function ``corresponding'' to $\mu$.

Let $S$ denote the forward shift on $H^2$, i.e. $Sf(z) = zf(z)$. Beurling's Theorem says that the nontrivial invariant subspaces for $S$ are those subspaces of $H^2$ of the form $bH^2$, where $b$ is an inner function. The nontrivial invariant subspaces of the backward shift $S^\ast$ $\left(S^\ast f(z) = \frac{ f(z) - f(0) }{z} \right)$ are then of the form $H^2\ominus bH^2$, where $b$ is inner. For each $b\in H^\infty$, there is a de Branges-Rovnyak space $\mathcal{H}(b)$ \cite{dBR66a,ADV09a}, which is by definition the range of the operator $A=(I-T_bT_{\overline{b}})^{1/2}:H^2\rightarrow H^2$ along with the Hilbert space structure that makes $A$ a partial isometry from $H^2$ to $\mathcal{H}(b)$. (Here $T_b$ is the Toeplitz operator with symbol $b$.) In this paper, we are only concerned with the situation in which $b$ is inner, and in that case we have $\mathcal{H}(b)=H^2\ominus bH^2$ with the norm inherited from $H^2$. For a complete treatment, see Sarason's book \cite{Sar94}.

\begin{definition}For a finite nonnegative Borel measure $\mu$ on $[0,1)$, we define the normalized Cauchy transform $V_\mu$ from $L^1(\mu)$ to the set of functions on $\mathbb{C}\setminus\mathbb{T}$ by
\begin{equation}\label{NCtransform}V_\mu f(z)=\frac{\displaystyle\int_{0}^{1}\frac{f(x)}{1-ze^{-2\pi ix}}\,d\mu(x)}{\displaystyle\int_{0}^{1}\frac{1}{1-ze^{-2\pi ix}}\,d\mu(x)}.\end{equation}\end{definition}
If $\mu$ is a singular probability measure on $[0,1)$ with corresponding inner function $b$, then $V_\mu$ is an isometry of $L^2(\mu)$ onto $\mathcal{H}(b)$ \cite{Clark72,Sar90,Pol93,HW15}.

The unnormalized Cauchy transform shall here be denoted $C_\mu$ (in \cite{Sar94}, $K_\mu$):
\begin{equation}\label{UCtransform}C_\mu f(z)=\int_{0}^{1}\frac{f(x)}{1-ze^{-2\pi ix}}\,d\mu(x).\end{equation}

Define $e_\lambda(x):=e^{2\pi i\lambda x}$. In \cite{HW15}, it was proved that if $\mu$ is a singular probability measure, then the sequence $\{g_n\}_{n=0}^{\infty}$ defined by
\begin{equation}
\label{gs}g_0=e_0, \qquad
g_n=e_n-\sum_{i=0}^{n-1}\langle e_n,e_i\rangle g_i. 
\end{equation}
is a Parseval frame in $L^2(\mu)$ satisfying
\begin{equation}\label{gndextro}\sum_{n=0}^{\infty}\langle f,g_n\rangle e_n=f\end{equation}
in norm for all $f\in L^2(\mu)$. Equations (\ref{gs}) and (\ref{gndextro}) are referred to as the Kaczmarz algorithm \cite{Kacz37,KwMy01}.  Equation (\ref{gndextro}) can be interpreted as a Fourier expansion of $f \in L^2(\mu)$; see also \cite{Pol93,Str06a}.

There exists a sequence $\{\alpha_n\}$ of scalars (depending on $\mu$) such that
\begin{equation}\label{gnformula}g_n=\sum_{i=0}^{n}\overline{\alpha_{n-i}}e_i\end{equation}
for all $n\in\mathbb{N}_0$.  This sequence is obtained by inverting a lower triangular banded matrix whose $jk$-th entry is $\hat{\mu}(j-k)$.  For an explicit expression, see \cite{HW15}. The following was also proved:

\begin{thmalpha}[\cite{HW15}]\label{PapOneMain} If $\mu$ is a singular probability measure, then for all $f\in L^2(\mu)$,
\begin{equation}\label{NCTalt}V_\mu f(z)=\sum_{n=0}^{\infty}\langle f,g_n\rangle z^n.\end{equation}
\end{thmalpha}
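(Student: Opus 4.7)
The plan is to expand the Cauchy integral into geometric series and then identify $\sum_n \langle f,g_n\rangle z^n$ as a Cauchy product whose two factors recover the ratio defining $V_\mu f$.

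First, I would fix $z \in \mathbb{D}$ and expand $\frac{1}{1-ze^{-2\pi ix}} = \sum_{n=0}^{\infty} z^n e^{-2\pi i n x}$; the series converges uniformly in $x$ since $|z|<1$, so exchanging sum and integral is legal. This gives
\begin{equation*}
C_\mu f(z) = \sum_{n=0}^{\infty} \langle f, e_n\rangle_\mu\, z^n, \qquad C_\mu 1(z) = \sum_{n=0}^{\infty} \hat\mu(n)\, z^n,
\end{equation*}
so $V_\mu f(z)$ is the quotient of these two analytic functions on $\mathbb{D}$. I would note in passing that the denominator is nonzero on $\mathbb{D}$ because $\mathrm{Re}\,\frac{1}{1-ze^{-2\pi ix}} > 0$ for $|z|<1$, so $C_\mu 1$ is a Herglotz-type function with positive real part.

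Next, using the explicit formula \eqref{gnformula}, $\langle f, g_n\rangle = \sum_{i=0}^{n} \alpha_{n-i}\langle f, e_i\rangle$, and the series $\sum_n \langle f,g_n\rangle z^n$ converges on $\mathbb{D}$ because the Parseval frame property forces $(\langle f,g_n\rangle)$ to be $\ell^2$, hence bounded. Recognizing the expression as a discrete convolution, I would rewrite
\begin{equation*}
\sum_{n=0}^{\infty} \langle f, g_n\rangle z^n = \left(\sum_{k=0}^{\infty} \alpha_k z^k\right)\left(\sum_{i=0}^{\infty}\langle f, e_i\rangle_\mu\, z^i\right).
\end{equation*}
The second factor is already $C_\mu f(z)$, so it remains to identify the first factor as $1/C_\mu 1(z)$.

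To do this, I would exploit the matrix inversion alluded to after \eqref{gnformula}. The defining recursion \eqref{gs} can be rewritten as $e_n = \sum_{i=0}^{n}\langle e_n, e_i\rangle_\mu\, g_i = \sum_{i=0}^n \overline{\hat\mu(n-i)}\,g_i$, so the lower-triangular Toeplitz matrix $M$ with symbol $\sum_j \overline{\hat\mu(j)}z^j$ sends the $g$-sequence to the $e$-sequence, while \eqref{gnformula} says its inverse $N$ has symbol $\sum_k \overline{\alpha_k}z^k$. The identity $NM = I$ then unwinds into the scalar relations $\sum_{l=0}^{k} \alpha_{k-l}\hat\mu(l) = \delta_{k,0}$ (after taking the conjugate of the matrix equation, which swaps the bars off of $\alpha$ and $\hat\mu$). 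These relations are precisely the statement that the product $(\sum_k \alpha_k z^k)(\sum_l \hat\mu(l)z^l)$ equals $1$ as formal power series, and hence as analytic functions wherever both converge, i.e. on $\mathbb{D}$. Combining, $\sum_n \langle f, g_n\rangle z^n = C_\mu f(z)/C_\mu 1(z) = V_\mu f(z)$.

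The only real subtlety is the bookkeeping with complex conjugates when passing between the matrix identity and its scalar counterpart, so I would be careful that the definition $\hat\mu(n) = \int e^{-2\pi i n x}d\mu$ matches with $\langle e_n, e_i\rangle_\mu = \overline{\hat\mu(n-i)}$ throughout; everything else is routine convergence and rearrangement of absolutely convergent series on $\mathbb{D}$.
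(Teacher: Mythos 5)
The paper does not actually prove this statement: it is imported from \cite{HW15} as Theorem \ref{PapOneMain}, so there is no internal proof to compare against. Your argument is nonetheless a correct, essentially self-contained derivation, and it is consistent with the identities the present paper relies on later: the key fact you extract from the matrix inversion, namely $\sum_k \alpha_k z^k = 1/C_\mu 1(z)$, is exactly Equation \eqref{InnerByUCtransform} (stated there as $b(z)=1-1/C_\mu 1(z)=1-\sum_n\alpha_n z^n$), and your Cauchy-product manipulation mirrors the computation in \eqref{Eq:zngn} in the proof of Theorem \ref{DRkernel}. The one step you should tighten is the phrase ``as analytic functions wherever both converge, i.e.\ on $\mathbb{D}$'': the formal-power-series identity $\sum_{l}\alpha_{k-l}\hat\mu(l)=\delta_{k,0}$ does not by itself guarantee that $\sum_k\alpha_k z^k$ has radius of convergence at least $1$, which you need in order to justify the Cauchy product at every $z\in\mathbb{D}$. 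This is easily supplied: since $\mathrm{Re}\,C_\mu 1(z)>0$ on $\mathbb{D}$, the function $1/C_\mu 1$ is analytic there, its Taylor coefficients $\beta_k$ satisfy the same triangular recursion $\sum_l\beta_{k-l}\hat\mu(l)=\delta_{k,0}$ (solvable uniquely because $\hat\mu(0)=1$), hence $\beta_k=\alpha_k$ and the series converges on all of $\mathbb{D}$. (Alternatively, you can sidestep the convergence of $\sum_k\alpha_k z^k$ entirely by noting that both $V_\mu f$ and $\sum_n\langle f,g_n\rangle z^n$ are analytic on $\mathbb{D}$ and comparing their Taylor coefficients at the origin, the latter being the convolution $\sum_i\alpha_{n-i}\langle f,e_i\rangle=\langle f,g_n\rangle$.) With that sentence added, the proof is complete, and your bookkeeping of conjugates ($\langle e_n,e_i\rangle_\mu=\overline{\hat\mu(n-i)}$ versus \eqref{gnformula}) is consistent throughout.
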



The following is proven in \cite{Pol93}; we give an alternate proof here using Theorem \ref{PapOneMain}.

\begin{proposition}\label{muboundprop}If $\mu$ is a singular probability measure and $f\in L^2(\mu)$, then $f$ is an $L^2(\mu)$-boundary function of $V_\mu f(z)$. Consequently, for any $F\in\mathcal{H}(b)$, $V_\mu^{-1}F=F^\ast$.\end{proposition}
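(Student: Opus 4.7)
The plan is to use Theorem \ref{PapOneMain} to convert $V_\mu f(z)$ into a power series whose coefficients are the Parseval-frame inner products $\langle f, g_n\rangle$, and then deduce the boundary convergence from the reconstruction identity \eqref{gndextro} via an Abel-summation argument in the Banach space $L^2(\mu)$.

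First, by Theorem \ref{PapOneMain}, for $z = re^{2\pi ix}$ with $0 < r < 1$,
\begin{equation*}
V_\mu f(re^{2\pi ix}) = \sum_{n=0}^{\infty} \langle f, g_n\rangle r^n e_n(x).
\end{equation*}
Since $\{g_n\}$ is Parseval, the coefficients $\langle f, g_n\rangle$ are bounded, so the series on the right converges absolutely in $L^\infty$, and hence in $L^2(\mu)$. Thus, for each fixed $r<1$, $V_\mu f(re^{2\pi i\cdot})$ is the $L^2(\mu)$-limit of $\sum_{n=0}^{N} \langle f, g_n\rangle r^n e_n$ as $N\to\infty$.

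Next, set $c_n := \langle f, g_n\rangle e_n \in L^2(\mu)$ and $s_N := \sum_{n=0}^{N} c_n$. By the Kaczmarz reconstruction \eqref{gndextro}, $s_N \to f$ in $L^2(\mu)$. The key step is then a standard vector-valued Abel theorem: using summation by parts,
\begin{equation*}
\sum_{n=0}^{\infty} r^n c_n = (1-r)\sum_{n=0}^{\infty} r^n s_n,
\end{equation*}
so, since $(1-r)\sum_{n=0}^\infty r^n = 1$,
\begin{equation*}
\sum_{n=0}^{\infty} r^n c_n - f = (1-r)\sum_{n=0}^{\infty} r^n (s_n - f).
\end{equation*}
Given $\varepsilon>0$, choose $N$ with $\|s_n - f\|_\mu < \varepsilon$ for $n \geq N$; splitting the sum at $N$ and bounding $(1-r)\sum_{n \geq N} r^n \leq 1$ shows $\limsup_{r\to 1^-}\|V_\mu f(re^{2\pi i\cdot}) - f\|_\mu \leq \varepsilon$. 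Since $\varepsilon$ was arbitrary, $f$ is the $L^2(\mu)$-boundary function of $V_\mu f$.

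For the consequence, since $V_\mu$ is an isometry of $L^2(\mu)$ onto $\mathcal{H}(b)$, any $F \in \mathcal{H}(b)$ equals $V_\mu f$ for $f = V_\mu^{-1}F \in L^2(\mu)$; the first part gives $F^\ast = f = V_\mu^{-1}F$.

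The main technical point is the Abel/summation-by-parts step, which needs to be executed in the Hilbert space $L^2(\mu)$ rather than on scalars; the standard scalar argument transfers with essentially no change, but one must confirm that the series $\sum r^n c_n$ legitimately represents $V_\mu f(re^{2\pi i\cdot})$ as an element of $L^2(\mu)$, which follows from the uniform (in $x$) bound on the tail given by $\sum_{n \geq N} r^n \|f\|_\mu$.
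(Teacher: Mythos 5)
Your proof is correct and follows essentially the same route as the paper: both identify $V_\mu f(re^{2\pi ix})$ with $\sum_n \langle f,g_n\rangle r^n e_n$ via Theorem \ref{PapOneMain} and then pass to the boundary using the fact that the norm-convergent Kaczmarz expansion \eqref{gndextro} is Abel summable to the same limit. The only difference is that the paper invokes ``summable implies Abel summable'' as a known fact, whereas you spell out the summation-by-parts argument in $L^2(\mu)$; that is a correct elaboration, not a different approach.
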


\begin{proof}
Since the sum in \eqref{gndextro} is summable in $L^2(\mu)$, it is Abel summable, and hence by \eqref{NCTalt} we have that
\begin{equation*}\lim_{r\rightarrow1^-}V_\mu f(re^{2\pi ix})=\lim_{r\rightarrow1^-}\sum_{n=0}^{\infty}\langle f,g_n\rangle r^ne_n=\sum_{n=0}^{\infty}\langle f,g_n\rangle e_n=f\end{equation*}
in the $L^2(\mu)$ norm. Hence, $f$ is an $L^2(\mu)$-boundary function of $V_\mu f(z)$.

Now if $F\in\mathcal{H}(b)$, then by bijectivity of $V_\mu$, there exists a  $f\in L^2(\mu)$ such that $V_\mu f(z)=F(z)$. Then $f$ is an $L^2(\mu)$-boundary of $V_\mu f(z)=F(z)$, and since an $L^2(\mu)$-boundary is unique, we have $F^\ast=f$. Hence, $V_\mu^{-1}F=F^\ast$.
\end{proof}

\begin{corollary}\label{dBRbound} If $\mu$ is a singular probability measure with corresponding inner function $b$, then for any $f(z),j(z)\in\mathcal{H}(b)$, we have
\begin{equation}{\langle f,j\rangle}_{\mathcal{H}(b)}={\langle f^\ast,j^\ast\rangle}_{\mu},\end{equation}
where $f^\ast$ and $j^\ast$ are the $L^2(\mu)$-boundary functions of $f$ and $j$, respectively.\end{corollary}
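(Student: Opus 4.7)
The plan is to deduce this immediately from Proposition \ref{muboundprop} together with the fact, recalled in the excerpt, that $V_\mu : L^2(\mu) \to \mathcal{H}(b)$ is an isometry (onto) for singular probability $\mu$ with corresponding inner $b$.

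First I would fix $f, j \in \mathcal{H}(b)$ and use surjectivity of $V_\mu$ to write $f = V_\mu \varphi$ and $j = V_\mu \psi$ for uniquely determined $\varphi, \psi \in L^2(\mu)$. Proposition \ref{muboundprop} tells us that $V_\mu^{-1} F = F^\ast$ for every $F \in \mathcal{H}(b)$, so in fact $\varphi = f^\ast$ and $\psi = j^\ast$, i.e.\ the $L^2(\mu)$-boundary functions of $f$ and $j$ exist and are precisely the preimages under $V_\mu$. Then the isometry property of $V_\mu$ gives
\begin{equation*}
\langle f, j\rangle_{\mathcal{H}(b)} = \langle V_\mu f^\ast, V_\mu j^\ast\rangle_{\mathcal{H}(b)} = \langle f^\ast, j^\ast\rangle_\mu,
\end{equation*}
which is exactly the claim. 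There is no real obstacle here; the corollary is essentially a repackaging of the fact that $V_\mu$ is a unitary between $L^2(\mu)$ and $\mathcal{H}(b)$ whose inverse is precisely the boundary-value map, and both of these ingredients are already in hand (the first quoted from \cite{Clark72,Sar90,Pol93,HW15}, the second established in Proposition \ref{muboundprop}).
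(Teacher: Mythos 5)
Your proposal is correct and follows essentially the same route as the paper's own proof: both identify $V_\mu^{-1}f=f^\ast$ and $V_\mu^{-1}j=j^\ast$ via Proposition \ref{muboundprop} and then transfer the inner product using the fact that $V_\mu$ is an onto isometry from $L^2(\mu)$ to $\mathcal{H}(b)$. No gaps.
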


\begin{proof}Since $V_\mu$ is an isometry from $L^2(\mu)$ to $\mathcal{H}(b)$, Proposition \ref{muboundprop} implies
$$\langle f,j\rangle_{\mathcal{H}(b)}=\langle V_{\mu}^{-1}f,V_{\mu}^{-1}j\rangle_{\mu}=\langle f^\ast,j^\ast\rangle_{\mu}.$$
\end{proof}

Thus, for inner functions $b$ with $b(0) = 0$, functions in $\mathcal{H}(b)$ not only have Lebesgue boundaries, but also $L^2(\mu)$-boundaries, and the norm of $\mathcal{H}(b)$ is equal to boundary integration with respect to either boundary/measure pair. As an ordinary subspace of $H^2$, $\mathcal{H}(b)$ is of course a reproducing kernel Hilbert space. Let $k_z(w)\in H^2$ denote the Szeg\H{o} kernel of $H^2$. 
It is known (see \cite{Sar94}) that the kernel of $\mathcal{H}(b)$ is given by
\begin{equation*}k^b_z(w)=(1-\overline{b(z)}b(w))k_z(w).\end{equation*}
Using \eqref{NCTalt}, we give the following alternative form:

\begin{theorem}\label{DRkernel}Let $\mu$ be a singular probability measure with corresponding inner function $b$ and associated sequence $\{g_n\}_{n=0}^{\infty}\subset L^2(\mu)$ defined by \eqref{gs}. Then
\begin{equation} \label{Eq:DRkernel}
k_z^b(w)=\sum_{m=0}^{\infty}\sum_{n=0}^{\infty}\langle g_n,g_m\rangle_{\mu}\overline{z}^n w^m.
\end{equation}
\end{theorem}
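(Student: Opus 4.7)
The plan is to derive the kernel formula by pulling the Parseval frame $\{g_n\}$ from $L^2(\mu)$ over to $\mathcal{H}(b)$ via the normalized Cauchy transform, and then using the general RKHS formula expressing a reproducing kernel through any Parseval frame.

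First I would observe that since $V_\mu$ is an isometric isomorphism of $L^2(\mu)$ onto $\mathcal{H}(b)$ and $\{g_n\}_{n=0}^\infty$ is a Parseval frame for $L^2(\mu)$, the image sequence $\{V_\mu g_n\}_{n=0}^\infty$ is a Parseval frame for $\mathcal{H}(b)$. Applying Theorem \ref{PapOneMain} to the function $g_n$ yields the explicit expansion
\[
V_\mu g_n(w)=\sum_{m=0}^\infty \langle g_n,g_m\rangle_\mu\, w^m.
\]

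Next I would invoke the standard RKHS identity: if $\{\phi_n\}$ is a Parseval frame in a reproducing kernel Hilbert space $\mathcal{H}$ with kernel $K$, then $K_z(w)=\sum_n \overline{\phi_n(z)}\,\phi_n(w)$. This follows because the series converges in $\mathcal{H}$ (its coefficient sequence has $\ell^2$-norm $K(z,z)^{1/2}$), and the Parseval reconstruction $F=\sum_n\langle F,\phi_n\rangle \phi_n$ evaluated at $z$ identifies the right-hand side as a reproducing element at $z$. Applied to $\phi_n=V_\mu g_n$, this gives
\[
k_z^b(w)=\sum_{n=0}^\infty \overline{V_\mu g_n(z)}\, V_\mu g_n(w).
\]

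Finally I would substitute the series for $V_\mu g_n$ on both sides and interchange the order of summation:
\[
k_z^b(w)=\sum_{p=0}^\infty\sum_{q=0}^\infty \overline{z}^p w^q\Bigl(\sum_{n=0}^\infty \langle g_p,g_n\rangle_\mu\langle g_n,g_q\rangle_\mu\Bigr).
\]
The inner sum collapses by the Parseval-frame identity $\sum_n\langle x,g_n\rangle_\mu\overline{\langle y,g_n\rangle_\mu}=\langle x,y\rangle_\mu$, applied with $x=g_p$ and $y=g_q$ (and using $\overline{\langle g_q,g_n\rangle_\mu}=\langle g_n,g_q\rangle_\mu$), to give $\langle g_p,g_q\rangle_\mu$. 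Relabeling $p\to n$ and $q\to m$ recovers \eqref{Eq:DRkernel}.

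The main obstacle is purely bookkeeping: justifying the interchange of the triple sum. This follows from absolute convergence, using Cauchy--Schwarz together with $\sum_n|\langle g_k,g_n\rangle_\mu|^2=\|g_k\|_\mu^2\le 1$ (the Parseval-frame bound forces each frame element to have norm at most one), combined with $|z|,|w|<1$ to provide geometric decay. No other difficulty is expected, since Theorem \ref{PapOneMain} and the isometry of $V_\mu$ carry the analytic content.
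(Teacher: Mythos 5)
Your argument is correct, but it follows a genuinely different route from the paper's. The paper works with the explicit coefficient representation $g_n=\sum_{j=0}^{n}\overline{\alpha_{n-j}}e_j$ and the identity $b(z)=1-1/C_\mu 1(z)$ to compute the closed form $\sum_{n}\overline{z}^{n}g_n=(1-\overline{b(z)})k_z^\ast$ in $L^2(\mu)$, and then invokes Sarason's Herglotz-representation computation $C_{\mu}k_z^\ast(w)=(1-\overline{b(z)})^{-1}(1-b(w))^{-1}k_z^b(w)$ to identify $V_\mu\left[(1-\overline{b(z)})k_z^\ast\right]$ with $k_z^b$; applying Theorem \ref{PapOneMain} to $\sum_n\overline{z}^ng_n$ then yields \eqref{Eq:DRkernel}. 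You instead transport the Parseval frame $\{g_n\}$ to $\mathcal{H}(b)$ via the unitarity of $V_\mu$, use the standard expansion $k_z^b(w)=\sum_n\overline{V_\mu g_n(z)}\,V_\mu g_n(w)$ of a reproducing kernel in a Parseval frame, and collapse the resulting triple sum with the identity $\sum_n\langle g_p,g_n\rangle\langle g_n,g_q\rangle=\langle g_p,g_q\rangle$; your justification of the interchange is sound, since Cauchy--Schwarz in $n$ together with $\lVert g_k\rVert_\mu\leq 1$ bounds the triple sum by $(1-\lvert z\rvert)^{-1}(1-\lvert w\rvert)^{-1}$. Your approach is shorter, avoids the explicit $\alpha_n$ and the Sarason computation entirely, and is closer in spirit to the later Theorem \ref{kernelthm} (of which it is essentially a special case exploiting that $V_\mu g_n$ is the image of $g_n$). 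What it gives up is the intermediate identity $\sum_n\overline{z}^ng_n=(1-\overline{b(z)})k_z^\ast$, which the paper reuses later (e.g.\ in the proof of Corollary \ref{Jspacethm}), so the paper's more computational route earns its keep elsewhere.
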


\begin{proof}  We can combine \eqref{innerpoisson} with a result in \cite{HW15} (which uses ideas in \cite{KwMy01}) to obtain that the inner function $b$ satisfies
\begin{equation}  \label{InnerByUCtransform} 
b(z)=1-\frac{1}{C_\mu1(z)} = 1 - \sum_{n=0}^{\infty}\alpha_{n}z^n = -\sum_{n=1}^{\infty}\alpha_n z^n.
\end{equation}
%
Since the sequence $\{g_n\}_{n=0}^{\infty}$ is Bessel, $\sum_{n=0}^{\infty}\overline{z}^n g_n$ converges in $L^2(\mu)$ for all $z\in\mathbb{D}$. Observe that for a fixed $z\in\mathbb{D}$,
\begin{align}\sum_{n=0}^{\infty}\overline{z}^ng_n&=\sum_{n=0}^{\infty}\overline{z}^n\left(\sum_{j=0}^{n}\overline{\alpha_{n-j}}e_j\right) \notag \\
&=\sum_{j=0}^{\infty}\sum_{n=0}^{\infty}\overline{z}^{n+j}\overline{\alpha_n}e_j \notag \\
&=\left(\sum_{n=0}^{\infty}\overline{\alpha_{n}}\overline{z}^n\right)\left(\sum_{j=0}^{\infty}\overline{z}^je_j\right) \notag \\
&=(1-\overline{b(z)})k_z^\ast. \label{Eq:zngn}
\end{align}
The rearrangement of summation above is justified, because
\[ \sum_{j=0}^{\infty}\sum_{n=0}^{\infty}\lVert\overline{z}^{n+j}\overline{\alpha_n}e_j\rVert \leq \sum_{j=0}^{\infty}\lvert z\rvert^j\sqrt{\sum_{n=0}^{\infty}\lvert z^2\rvert^n}\sqrt{\sum_{n=0}^{\infty}\lvert\alpha_n\rvert^2} < \infty \]
which shows that the sum converges absolutely.
Recall from Theorem \ref{PapOneMain} that for $f\in L^2(\mu)$, $V_\mu f(w)=\sum_{n=0}^{\infty}\langle f,g_n\rangle w^n$. Therefore, we have
\begin{align*}V_\mu\left[\sum_{n=0}^{\infty}\overline{z}^ng_n\right](w)&=\sum_{m=0}^{\infty}\left\langle\sum_{n=0}^{\infty}\overline{z}^ng_n,g_m\right\rangle w^m\\
&=\sum_{m=0}^{\infty}\sum_{n=0}^{\infty}\langle g_n,g_m\rangle\overline{z}^nw^m.\end{align*}

On the other hand, in \cite{Sar94} it is computed via the Herglotz representation that
\[ 
C_{\mu}k_z^\ast(w) = (1-\overline{b(z)})^{-1}(1-b(w))^{-1}k_z^b(w).
\]
Therefore (by $V_\mu$'s original definition, but in accordance with \eqref{NCtransform}, \eqref{UCtransform}, and \eqref{InnerByUCtransform}),
\begin{align*}V_{\mu}\left[(1-\overline{b(z)})k_z^\ast\right](w)&:=(1-b(w))C_{\mu}\left[(1-\overline{b(z)})k_z^\ast\right](w)\\
&=(1-b(w))(1-\overline{b(z)})C_{\mu}k_z(w)\\
&=k_z^b(w).
\end{align*}
Equation (\ref{Eq:DRkernel}) now follows from Equation (\ref{Eq:zngn}).
\end{proof}


\begin{theorem}\label{dBRkern}If $\mu$ is a singular probability measure on $[0,1)$ with corresponding inner function $b$, then $k^b\in\mathcal{K}(\mu)$, and $\mu\in\mathcal{M}(k_z^b)$.\end{theorem}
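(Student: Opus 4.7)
The proof is essentially a short assembly of facts already in place by this point of the paper. The main observation is that $k^b$ is by definition the reproducing kernel of the de Branges–Rovnyak space $\mathcal{H}(b)$, so in particular it is automatically a positive matrix on $\mathbb{D}$ (by the converse of Moore–Aronszajn noted earlier), and for each fixed $z \in \mathbb{D}$ the function $k_z^b$ belongs to $\mathcal{H}(b)$. Since for a singular probability measure $\mu$ the normalized Cauchy transform $V_\mu$ is an isometric bijection from $L^2(\mu)$ onto $\mathcal{H}(b)$, Proposition \ref{muboundprop} guarantees that each $k_z^b$ has an $L^2(\mu)$-boundary function $(k_z^b)^\ast = V_\mu^{-1} k_z^b \in L^2(\mu)$. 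This takes care of the prerequisite for membership in both $\mathcal{K}(\mu)$ and $\mathcal{M}(k_z^b)$.

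The remaining task is to verify the boundary reproducing identity
\begin{equation*}
k_z^b(w) = \int_{0}^{1} (k_z^b)^\ast(x)\, \overline{(k_w^b)^\ast(x)} \, d\mu(x)
\end{equation*}
for all $z,w \in \mathbb{D}$. The right-hand side is just $\langle (k_z^b)^\ast, (k_w^b)^\ast \rangle_\mu$, which by Corollary \ref{dBRbound} equals $\langle k_z^b, k_w^b \rangle_{\mathcal{H}(b)}$. Because $k^b$ is the reproducing kernel of $\mathcal{H}(b)$, this last inner product is precisely $k_z^b(w)$, completing the identity.

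Putting the two pieces together yields $k^b \in \mathcal{K}(\mu)$, and the same argument read from the measure's perspective yields $\mu \in \mathcal{M}(k_z^b)$. There is no substantive obstacle: all of the analytic work (Parseval-frame expansion via $\{g_n\}$, Abel summability of the Kaczmarz expansion, isometry of $V_\mu$ onto $\mathcal{H}(b)$, and the comparison of the $\mathcal{H}(b)$-inner product with $L^2(\mu)$-integration of boundaries) has already been done in Proposition \ref{muboundprop} and Corollary \ref{dBRbound}. The statement is a formal corollary of these two results together with the defining property of a reproducing kernel.
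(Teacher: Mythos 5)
Your proof is correct and follows essentially the same route as the paper's own (very terse) proof: both reduce the statement to the reproducing property of $k^b$ in $\mathcal{H}(b)$ combined with Proposition \ref{muboundprop} and Corollary \ref{dBRbound}. Your version merely spells out the details that the paper leaves implicit.
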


\begin{proof}$k_z^b$ is a reproducing kernel of $\mathcal{H}(b)$ with respect to the $H^2$ norm. By Corollary \ref{dBRbound}, it reproduces itself with respect to $L^2(\mu)$-boundary.\end{proof}

\begin{remark}It should be noted that Proposition \ref{muboundprop} and Corollary \ref{dBRbound} are previously known. See, for example, Clark's influential paper \cite{Clark72}, Poltoratskii \cite{Pol93}, and Sarason's book \cite{Sar94}. Theorem \ref{dBRkern} is thus simply a formality. However, it can be proven another way, by combining Theorem \ref{DRkernel} with Theorem \ref{kernelthm}, which is to come. \end{remark}

\begin{corollary}\label{dBRsubkern}If $V\subseteq\mathcal{H}(b)$ is a closed subspace and $P_V$ is the orthogonal projection onto $V$, then $P_Vk_z^b\in\mathcal{K}(\mu)$.\end{corollary}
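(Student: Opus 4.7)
The plan is to exploit the standard fact that if $H_0$ is a reproducing kernel Hilbert space with kernel $k$ and $V\subseteq H_0$ is a closed subspace, then the reproducing kernel of $V$ is $K_z^V(w) := (P_V k_z)(w)$. I would begin by noting this explicitly for $H_0 = \mathcal{H}(b)$: for any $f\in V$,
\[
\langle f, P_V k_z^b\rangle_{\mathcal{H}(b)} = \langle P_V f, k_z^b\rangle_{\mathcal{H}(b)} = \langle f, k_z^b\rangle_{\mathcal{H}(b)} = f(z),
\]
so $P_V k_z^b$ is the reproducing kernel of $V$ as a subspace of $\mathcal{H}(b)$. In particular, $P_V k_z^b$ is a positive matrix on $\mathbb{D}$ (this is the converse half of the Moore--Aronszajn correspondence).

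Next I need to check that each $P_V k_z^b$ possesses an $L^2(\mu)$-boundary function. This is immediate from Proposition \ref{muboundprop}: since $P_V k_z^b \in \mathcal{H}(b)$, it has an $L^2(\mu)$-boundary function $(P_V k_z^b)_\mu^\ast = V_\mu^{-1}(P_V k_z^b)$.

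It then remains to verify the reproducing identity against $L^2(\mu)$-boundary integration. Here Corollary \ref{dBRbound} does all the work: for any $z,w\in\mathbb{D}$, applying that corollary to the pair $P_V k_z^b, P_V k_w^b \in \mathcal{H}(b)$ gives
\[
\int_0^1 (P_V k_z^b)^\ast(x)\,\overline{(P_V k_w^b)^\ast(x)}\,d\mu(x)
= \langle P_V k_z^b, P_V k_w^b\rangle_{\mathcal{H}(b)}.
\]
Using that $P_V$ is a self-adjoint projection and that $k_w^b$ is the $\mathcal{H}(b)$-reproducing kernel,
\[
\langle P_V k_z^b, P_V k_w^b\rangle_{\mathcal{H}(b)} = \langle P_V k_z^b, k_w^b\rangle_{\mathcal{H}(b)} = (P_V k_z^b)(w),
\]
which is precisely the reproducing identity required for membership in $\mathcal{K}(\mu)$.

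There is really no serious obstacle here; the statement is a corollary in the literal sense. The only ``content'' is the identification of the $L^2(\mu)$-boundary pairing with the $\mathcal{H}(b)$ inner product, which was already established in Corollary \ref{dBRbound} and ultimately rests on $V_\mu$ being an isometry from $L^2(\mu)$ onto $\mathcal{H}(b)$. The one small thing to be careful about is keeping the bookkeeping straight between ``kernel of the subspace $V$ with the $\mathcal{H}(b)$ norm'' (which is what $P_V k_z^b$ is) and the boundary reproducing property (which is what we want); Corollary \ref{dBRbound} is exactly the bridge between the two.
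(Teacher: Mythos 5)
Your argument is correct and is exactly the route the paper intends: the paper leaves this corollary unproved because it follows from Theorem \ref{dBRkern} by the same two observations you make, namely that $P_Vk_z^b$ is the reproducing kernel of $V$ in the $\mathcal{H}(b)$ norm and that Corollary \ref{dBRbound} converts the $\mathcal{H}(b)$ inner product into $L^2(\mu)$-boundary integration. Nothing is missing.
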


Since the ternary Cantor measure $\mu_3$ is singular, Theorem \ref{dBRkern} shows that $\mathcal{K}(\mu_3)$ is nonempty, despite $\mu_3$ being nonspectral. Corollary \ref{dBRsubkern} shows that $\mathcal{K}(\mu_3)$ contains other members as well. We shall see that there are many more kernels in $\mathcal{K}(\mu_3)$, including some that lie outside $\mathcal{H}(b)$.

\subsection{Wold Decompositions}

Let $b$ be an inner function, and let $\mu$ be its corresponding singular measure. Since the Toeplitz operator $T_b:H^2\to H^2$ is an isometry, and $\mathcal{H}(b)$ is a wandering subspace for $T_b$, the Wold Decomposition Theorem \cite{Wold54} implies
$$H^2=\bigoplus_{n=0}^{\infty} T_b^n\mathcal{H}(b).$$
Although the Wold Decomposition Theorem is well-known \cite{MP88a,LS97,Stessin99}, we offer the following alternative proof for the present situation:

\begin{theorem}Let $\mu$ be a finite singular measure on $[0,1)$, and let $b$ be the inner function corresponding to $\mu$ via the Herglotz representation. Then for any $f\in H^2$, there exists a unique sequence of functions $\{\phi_n\}_{n=0}^{\infty}\subset\mathcal{H}(b)$ such that
$$f=\sum_{n=0}^{\infty}\phi_{n}\cdot b^n.$$
\end{theorem}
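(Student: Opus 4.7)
The plan is to construct the decomposition inductively by repeatedly extracting the orthogonal projection onto the wandering subspace $\mathcal{H}(b)=H^2\ominus bH^2$ and then dividing by $b$, and then to bound the remainder by exploiting $|b(w)|<1$ on $\mathbb{D}$.

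First, set $f_0 = f$ and, for $n\geq 0$, define $\phi_n = P_{\mathcal{H}(b)} f_n$. Because $f_n - \phi_n \in bH^2$, there is a unique $f_{n+1}\in H^2$ with $f_n - \phi_n = b f_{n+1}$. Since $b$ is inner, multiplication by $b$ is an isometry on $H^2$ (it preserves the modulus on $\mathbb{T}$), so $\|f_{n+1}\|_{H^2} = \|f_n - \phi_n\|_{H^2}$. Iterating, I obtain the telescoped identity
\[
f = \sum_{n=0}^{N} \phi_n b^n + b^{N+1} f_{N+1}.
\]

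Next I would establish orthogonality of the family $\{\phi_n b^n\}_{n\geq 0}$ in $H^2$. For $n<m$, applying the isometry $T_b$ repeatedly gives $\langle \phi_n b^n, \phi_m b^m\rangle_{H^2} = \langle \phi_n, \phi_m b^{m-n}\rangle_{H^2}$, and $\phi_m b^{m-n} \in bH^2 \perp \mathcal{H}(b)$, so this inner product vanishes. Combining this with the Pythagorean identity and the isometry above yields
\[
\|f\|_{H^2}^2 = \sum_{n=0}^{N} \|\phi_n\|_{H^2}^2 + \|f_{N+1}\|_{H^2}^2,
\]
so $\sum_{n} \|\phi_n\|_{H^2}^2 < \infty$ and $\{\|f_{N+1}\|_{H^2}\}$ is bounded by $\|f\|_{H^2}$. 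Hence $\sum_{n=0}^\infty \phi_n b^n$ converges in $H^2$ to some $F$.

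The key step is showing $F=f$, i.e.\ that the remainder $b^{N+1} f_{N+1}\to 0$. The tool is the pointwise Szeg\H{o} bound: for $w\in\mathbb{D}$,
\[
|b^{N+1}(w) f_{N+1}(w)| \leq |b(w)|^{N+1}\, \|f_{N+1}\|_{H^2}\, \|k_w\|_{H^2}.
\]
Since $b$ is a nonconstant inner function, $|b(w)|<1$ strictly inside $\mathbb{D}$, and $\|f_{N+1}\|_{H^2}$ is bounded, so the right-hand side tends to $0$. The partial sums also converge pointwise to $F$ (norm convergence in the RKHS $H^2$ implies pointwise convergence), so $F(w)=f(w)$ for every $w\in\mathbb{D}$. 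Uniqueness follows from the same orthogonality: if $\sum_n \eta_n b^n = 0$ with $\eta_n\in\mathcal{H}(b)$, then $0 = \sum_n \|\eta_n\|_{H^2}^2$, forcing $\eta_n = 0$ for all $n$.

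I expect the main obstacle to be the vanishing of the remainder $b^{N+1} f_{N+1}$; without it, one would only obtain that $f - F$ lies in $\bigcap_{n\geq 0} b^n H^2$, which is the unitary part of the Wold decomposition of $T_b$. Converting this abstract obstruction into the concrete Szeg\H{o}-kernel estimate above is what makes the proof self-contained and sidesteps the general Wold machinery.
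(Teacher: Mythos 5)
Your proof is correct, but it takes a genuinely different route from the paper's. The paper does not construct the $\phi_n$ iteratively; instead it shows that $L=\overline{\text{span}}\{b^n\cdot\phi : n\in\mathbb{N}_0,\ \phi\in\mathcal{H}(b)\}$ is all of $H^2$ by a kernel computation: the reproducing kernels $\overline{b^n(z)}b^n(w)k_z^b(w)$ of the subspaces $b^n\mathcal{H}(b)$ telescope to $\frac{1-\overline{b^k(z)}b^k(w)}{1-\overline{z}w}$, whose $H^2$-distance from the Szeg\H{o} kernel $k_z$ is at most $\lvert b(z)\rvert^{2k}/(1-\lvert z\rvert)\to 0$, so $k_z\in L$ and hence $L=H^2$; existence and uniqueness of the expansion then follow from the mutual orthogonality of the spaces $b^n\mathcal{H}(b)$, with $\phi_n\cdot b^n$ defined as the orthogonal projection of $f$ onto $b^n\mathcal{H}(b)$. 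You instead build the $\phi_n$ by iterated projection onto the wandering subspace and division by $b$, and dispose of the remainder $b^{N+1}f_{N+1}$ via the pointwise bound $\lvert b(w)\rvert^{N+1}\,\lVert f\rVert_{H^2}\,\lVert k_w\rVert_{H^2}$. Both arguments ultimately rest on the same analytic fact --- $\lvert b\rvert<1$ strictly on $\mathbb{D}$ for a nonconstant inner function, so its powers tend to zero --- applied to the kernel at $z$ in the paper and to the remainder at $w$ in your version. Your construction is more explicit and identifies $\phi_n\cdot b^n$ directly as the projection of $f$ onto $b^n\mathcal{H}(b)$ without a separate density step, which is what makes the vanishing of the remainder the only real issue (and your Szeg\H{o}-kernel estimate handles it); the paper's computation has the side benefit of exhibiting the reproducing kernel of $b^n\mathcal{H}(b)$ explicitly, which it reuses in the proposition that follows the theorem.
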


\begin{proof}
We know that $k_z^b(w)=\frac{1-\overline{b(z)}b(w)}{1-\overline{z}w}$ is the kernel of $\mathcal{H}(b)$. Thus, $K_z(w)=\overline{b^n(z)}b^n(w)k_z^b(w)\in b^n\mathcal{H}(b)$ for each $n$. (Indeed, it is easy to see it is the kernel of $b^n\mathcal{H}(b)$.) Now, let
$$L=\overline{\text{span}}\{b^n\cdot\phi:n\in\mathbb{N}_0,\phi\in\mathcal{H}(b)\}.$$
For each $k\in\mathbb{N}$, we have that
\[ \sum_{n=0}^{k-1}\overline{b^n(z)}b^n(w)k_z^b(w) = \frac{1-\overline{b^k(z)}b^k(w)}{1-\overline{z}w} \in L. \]
Now, observe that
\begin{align*}\left\lVert\frac{1-\overline{b^k(z)}b^k(w)}{1-\overline{z}w}-\frac{1}{1-\overline{z}w}\right\rVert_{H^2}^2&=\int_{[0,1)}\frac{\lvert b^k(z){b^\ast}^k(e^{2\pi ix})\rvert^2}{\lvert 1-\overline{z}e^{2\pi ix}\rvert^2}\,dx\\
&=\int_{0}^{1}\frac{\lvert b^k(z)\rvert^2}{\lvert 1-\overline{z}e^{2\pi i x}\rvert^2}\,dx\\
&\leq\lvert b(z)\rvert^{2k} C,\end{align*}
where $C=\frac{1}{1-\lvert z\rvert}>0$. Since $b$ is inner, for each fixed $z\in\mathbb{D}$,
$$\lim_{k\rightarrow\infty}\frac{1-\overline{b^k(z)}b^k(w)}{1-\overline{z}w}=\frac{1}{1-\overline{z}w}$$
in the $H^2$-norm. Thus, $\frac{1}{1-\overline{z}w}\in L$ for each fixed $z\in\mathbb{D}$. Since $k_z(w)=\frac{1}{1-\overline{z}w}$ is the kernel of $H^2$, this implies $L=H^2$.

Since $T_{b}$ is an isometry, and $\mathcal{H}(b)$ is the orthogonal complement of the range of $T_{b}$, it follows readily that $b^n\mathcal{H}(b)\perp b^k\mathcal{H}(b)$ for all $n\neq k$ and thus
$$f=\sum_{n=0}^{\infty}\phi_n\cdot b^n,$$
where $\phi_n$ is the unique member of $\mathcal{H}(b)$ such that $\phi_n\cdot b^n$ is the orthogonal projection of $f$ onto $b^n\mathcal{H}(b)$.
\end{proof}

It is easy to show that for $f \in \mathcal{H}(b)$, $(bf)^\ast=f^\ast$, and so every element of $b^{n} \mathcal{H}(b)$ has an $L^2(\mu)$-boundary.  Therefore, if the Wold decomposition of a function $f\in H^2$ is a finite sum, it has an $L^2(\mu)$-boundary. Thus, the Wold Decomposition shows, among other things, that the set of functions in $H^2$ possessing $L^2(\mu)$-boundary is dense.

\begin{proposition}Let $\mu$ be a singular probability measure with corresponding inner function $b$.  Suppose $V_0,V_1,\ldots,V_N$ are  mutually orthogonal closed subspaces of $\mathcal{H}(b)$. Let $k^{(n)}_z(w)$ denote the kernel of $V_n$. Then the space $W=\bigoplus_{n=0}^{N}b^nV_n$ is a reproducing kernel Hilbert space with respect to the norm of $L^2(\mu)$-boundary integration, and its kernel is $K_z:=\sum_{n=0}^{N}\overline{b^n(z)}b^n k_z^{(n)}$. Consequently, $K_z\in\mathcal{K}(\mu)$, and $\mu\in\mathcal{M}(K)$.\end{proposition}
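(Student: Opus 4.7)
The strategy is to reduce everything to the orthogonality of the $V_n$ inside $\mathcal{H}(b)$ by observing that each summand $b^n V_n$ is a ``unitary copy'' of $V_n$ under two natural operations: multiplication by $b^n$ on $H^2$, which is isometric because $T_b$ is an isometry (Wold), and passage to the $L^2(\mu)$-boundary, which on $\mathcal{H}(b)$ is isometric by Corollary \ref{dBRbound} and which cancels the factor $b^n$ because $(bf)^\ast=f^\ast$ for $f\in\mathcal{H}(b)$ (stated just before the proposition; this reflects the fact that $b^\ast=1$ $\mu$-a.e.\ for the Clark measure $\mu$). Applying this inductively through the layers of the Wold decomposition gives $(b^n v)^\ast = v^\ast$ for every $v\in\mathcal{H}(b)$, so every element of $W$ has an $L^2(\mu)$-boundary.

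First I would check that the boundary map is an isometric embedding of $W$ into $L^2(\mu)$. For $w=\sum_{n=0}^{N}b^n v_n$ with $v_n\in V_n$, the pieces $b^n v_n$ live in the mutually orthogonal spaces $b^n\mathcal{H}(b)$ of the Wold decomposition, so $\|w\|_{H^2}^2=\sum_n\|v_n\|_{H^2}^2$. On the other hand $w^\ast=\sum_n v_n^\ast$, and since Corollary \ref{dBRbound} turns the orthogonality $V_n\perp V_m$ in $\mathcal{H}(b)$ into orthogonality of the $v_n^\ast$ in $L^2(\mu)$, one gets $\|w^\ast\|_\mu^2=\sum_n\|v_n^\ast\|_\mu^2=\sum_n\|v_n\|_{H^2}^2=\|w\|_{H^2}^2$. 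In particular $W$ is a Hilbert space under either norm, and we may speak of its reproducing kernel with respect to $L^2(\mu)$-boundary integration.

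Next I would verify the candidate kernel $K_z=\sum_{n=0}^N \overline{b^n(z)}\,b^n k_z^{(n)}$. Since each $k_z^{(n)}\in V_n$, we have $K_z\in W$, and by the boundary identity above, $K_z^\ast=\sum_n \overline{b^n(z)}\,(k_z^{(n)})^\ast$. For $w=\sum_m b^m v_m$ expand
\begin{equation*}
\langle w^\ast,K_z^\ast\rangle_\mu=\sum_{n,m} b^n(z)\,\langle v_m^\ast,(k_z^{(n)})^\ast\rangle_\mu .
\end{equation*}
The cross terms $n\neq m$ vanish by the orthogonality $V_m\perp V_n$ in $\mathcal{H}(b)$ transported to $L^2(\mu)$ by Corollary \ref{dBRbound}. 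For the diagonal terms, Corollary \ref{dBRbound} converts the $L^2(\mu)$ pairing back into the $H^2$ pairing with the reproducing kernel of $V_n$, giving $\langle v_n^\ast,(k_z^{(n)})^\ast\rangle_\mu=\langle v_n,k_z^{(n)}\rangle_{H^2}=v_n(z)$. Summing yields $\langle w^\ast,K_z^\ast\rangle_\mu=\sum_n b^n(z) v_n(z)=w(z)$, so $K_z$ is indeed the reproducing kernel of $W$ with respect to $L^2(\mu)$-boundary integration.

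Finally, applying this reproducing identity with $w=K_w$ gives $K_w(z)=\langle K_w^\ast,K_z^\ast\rangle_\mu$; conjugating and using $K_z(w)=\overline{K_w(z)}$ (the usual Hermitian symmetry of reproducing kernels) produces $K_z(w)=\int_0^1 K_z^\ast\,\overline{K_w^\ast}\,d\mu$, which is precisely the condition that $K\in\mathcal{K}(\mu)$ and $\mu\in\mathcal{M}(K)$. The only real difficulty in this program is the bookkeeping around boundaries of $b^n v$: one has to be comfortable that $(b^n v)^\ast=v^\ast$ (as a consequence of $b^\ast=1$ $\mu$-a.e.) for $v$ in any layer of the Wold decomposition, not just in $\mathcal{H}(b)$ itself; everything else is a direct application of the Wold decomposition together with Corollary \ref{dBRbound}.
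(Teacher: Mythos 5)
Your proposal is correct and takes essentially the same approach as the paper: both prove $\lVert w\rVert_{H^2}=\lVert w^\ast\rVert_{\mu}$ on $W$ by combining the Wold orthogonality of the spaces $b^n\mathcal{H}(b)$ with Corollary \ref{dBRbound} and the identity $(b^nf_n)^\ast=f_n^\ast$, and then verify the reproducing property of $K_z$ by the same orthogonality expansion. The only (immaterial) difference is that the paper carries out the final pairing in the $H^2$ inner product and transfers it to $L^2(\mu)$ via polarization, whereas you compute $\langle w^\ast,K_z^\ast\rangle_{\mu}$ directly.
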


\begin{proof}For any $f\in W$, we may write $f=f_0+bf_1+b^2f_2+\ldots+b^Nf_N$, where $f_n\in V_n$. Then observe that by mutual orthogonality of the spaces $\mathcal{H}(b),b\mathcal{H}(b),b^2\mathcal{H}(b),\ldots,b^N\mathcal{H}(b)$ in $H^2$, we have
\begin{align*}\lVert f\rVert_{H^2}^2&=\sum_{n=0}^{N}\lVert b^nf_n\rVert_{H^2}^2=\sum_{n=0}^{N}\lVert f_n\rVert_{H^2}^2=\sum_{n=0}^{N}\lVert f_n\rVert_{\mathcal{H}(b)}^2=\sum_{n=0}^{N}\lVert f_n^\ast\rVert_{\mu}^2.\end{align*}
By mutual orthogonality of the spaces $V_0,V_1,\ldots,V_N$ in $\mathcal{H}(b)$, the $f_n$ are orthogonal in $\mathcal{H}(b)$, and hence by Corollary \ref{dBRbound} the $f_n^\ast$ are orthogonal in $L^2(\mu)$. Hence,
\begin{align*}\sum_{n=0}^{N}\lVert f_n^\ast\rVert_{\mu}^2=\sum_{n=0}^{N}\lVert(b^n f)^\ast\rVert_{\mu}^2=\left\lVert \sum_{n=0}^{N}(b^nf_n)^\ast\right\rVert_{\mu}^2=\left\lVert\left(\sum_{n=0}^{N}b^nf_n\right)^\ast\right\rVert_{\mu}^2=\lVert f^\ast\rVert_{\mu}^2.\end{align*}
This shows that the $H^2$ norm and the $L^2(\mu)$-boundary norm are equal on $W$. Hence, the inner products are equal as well by the polarization identity. The proof is completed by noting that by orthogonality,
\begin{align*}\left\langle f,\sum_{n=0}^{N}\overline{b^n(z)}b^nk_z^{(n)}\right\rangle_{H^2}&=\left\langle\sum_{m=0}^{N}b^mf_m,\sum_{n=0}^{N}\overline{b^n(z)}b^nk_z^{(n)}\right\rangle_{H^2}\\
&=\sum_{n=0}^{N}b^n(z)\langle f_n,k_z^{(n)}\rangle_{H^2}\\
&=f(z)\end{align*}
\end{proof}

\section{Kernels in $\mathcal{K}(\mu)$ That Are not $H^2$ Kernels}

We have seen that for a singular probability measure $\mu$, there are many kernels in $\mathcal{K}(\mu)$, obtained by projecting the Szeg\H{o} kernel onto appropriate subspaces of $H^2$.  We now turn to showing that there are many kernels in $\mathcal{K}(\mu)$ which are not obtained in this way, and in fact the kernels will generate subspaces of $H^2$ for which the norm defined by the kernel is not identical to the norm in $H^2$.  The following definition will be convenient in our subsequent discussions:

\begin{definition}Given a Hilbert space $\mathbb{H}$ and two sequences $\{x_n\}_{n=0}^{\infty}$ and $\{y_n\}_{n=0}^{\infty}$ in $\mathbb{H}$, if we have
\begin{equation}\sum_{n=0}^{\infty}\langle f,x_n\rangle y_n=f\end{equation}
with convergence in norm for all $f\in\mathbb{H}$, then $\{x_n\}_{n=0}^{\infty}$ is said to be dextrodual to $\{y_n\}_{n=0}^{\infty}$ (or, ``a dextrodual of $\{y_n\}_{n=0}^{\infty}$''), and $\{y_n\}_{n=0}^{\infty}$ is said to be levodual to $\{x_n\}_{n=0}^{\infty}$.\end{definition}

In the parlance of frame theory, if $S_y$ is the synthesis operator of $\{y_n\}$ and $A_x$ is the analysis operator of $\{x_n\}$, then $\{x_n\}$ is dextrodual to $\{y_n\}$ if $S_yA_x=I$. However, a sequence does not need to be a frame to have a dextrodual. For example, $\{e_n\}_{n=0}^{\infty}$ is not even Bessel in $L^2(\mu)$ for $\mu$ a singular measure, but \eqref{gndextro} shows that the Parseval frame $\{g_n\}_{n=0}^{\infty}$ is dextrodual to $\{e_n\}_{n=0}^{\infty}$.

For a singular probability measure $\mu$ on $[0,1)$, in \cite{HW15} we demonstrate a large class of dextroduals of $\{e_n\}_{n=0}^{\infty}$ via the following theorem:

\begin{thmalpha}[HW15] \label{T:reproduce}
Let $\mu$ be a singular probability measure on $[0,1)$. Let $\nu$ be another singular probability measure on $[0,1)$ such that $\nu\perp\mu$. Let $0<\eta\leq1$, and define $\lambda:=\eta\mu+(1-\eta)\nu$. Let $\{h_n\}$ be the sequence associated to $\{e_n\}$ in $L^2(\lambda)$ via the Kaczmarz algorithm in Equation \eqref{gs}. Then for all $f\in L^2(\mu)$,
\begin{equation} \label{Eq:reproduce}
f=\sum_{n=0}^{\infty}{\langle f,\eta h_n\rangle}_{\mu} e_n
\end{equation}
in the $L^2(\mu)$ norm. Moreover, if $\lambda^\prime=\eta^\prime\mu+(1-\eta^\prime)\nu^\prime$ also satisfies the hypotheses, then $\lambda^\prime\neq\lambda$ implies $\{\eta^\prime h_n^\prime\}\neq\{\eta h_n\}$ in $L^2(\mu)$. \end{thmalpha}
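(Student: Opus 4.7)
The plan is to deduce the reconstruction formula from the Kaczmarz theorem \eqref{gndextro} applied inside $L^2(\lambda)$ itself, and then to pull the identity back to $L^2(\mu)$ via the splitting $\lambda = \eta\mu + (1-\eta)\nu$ together with $\mu \perp \nu$. Concretely, I would fix a Borel decomposition $[0,1) = A \sqcup B$ with $\mu(B) = 0 = \nu(A)$ and, for $f \in L^2(\mu)$, define the zero-extension $\tilde f := f\,\mathbf{1}_A$. Then $\tilde f \in L^2(\lambda)$ with $\|\tilde f\|_\lambda^2 = \eta\|f\|_\mu^2$, and since $\lambda$ is a singular probability measure (being a convex combination of two such), the already-established identity \eqref{gndextro} applied in $L^2(\lambda)$ gives
\[ \tilde f = \sum_{n=0}^{\infty} \langle \tilde f, h_n\rangle_\lambda\, e_n \]
in the $L^2(\lambda)$-norm. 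The key algebraic step is that $\tilde f \equiv 0$ on $B$ collapses the coefficients to $\langle \tilde f, h_n\rangle_\lambda = \eta \langle f, h_n\rangle_\mu = \langle f, \eta h_n\rangle_\mu$, converting the series into $\sum_n \langle f, \eta h_n\rangle_\mu e_n$. To upgrade convergence to the $L^2(\mu)$-norm, I would use the identity
\[ \|\tilde f - S_N\|_\lambda^2 = \eta\,\|f - S_N\|_\mu^2 + (1-\eta)\,\|S_N\|_\nu^2, \]
from which $\|f - S_N\|_\mu \to 0$ is immediate.

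For the uniqueness claim, suppose $\lambda' = \eta'\mu + (1-\eta')\nu'$ also satisfies the hypotheses and $\eta h_n = \eta' h_n'$ in $L^2(\mu)$ for every $n$. Since $h_0 = h_0' \equiv 1$ by construction, the $n=0$ case reads $\eta = \eta'$. Each $h_n$ has the explicit form \eqref{gnformula} and is therefore a polynomial in $e^{2\pi i x}$ of degree at most $n$ with leading coefficient $1$; assuming $\mu$ has infinite support, such polynomials are determined by their $\mu$-a.e.\ values, so $h_n = h_n'$ as polynomials for every $n$. Matching coefficients inductively in the defining recurrence $h_n = e_n - \sum_{i=0}^{n-1} \langle e_n, e_i\rangle_\lambda h_i$, each newly appearing Fourier moment at step $n$ must equal its primed counterpart, and so $\hat\lambda(k) = \hat{\lambda'}(k)$ for every $k \in \mathbb{Z}$. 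This forces $\lambda = \lambda'$, contradicting $\lambda \neq \lambda'$.

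The principal obstacle is the convergence upgrade in the existence half: the Kaczmarz theorem in $L^2(\lambda)$ delivers only $L^2(\lambda)$-convergence, and the partial sums $S_N$ are trigonometric polynomials on $[0,1)$ that in general carry nontrivial $\nu$-mass, so one cannot simply restrict to $A$. The explicit decomposition of $\|\cdot\|_\lambda^2$ into its $\mu$- and $\nu$-contributions bypasses this cleanly. A secondary subtlety is the polynomial identification in the uniqueness argument, which rests on the standard fact that a nonzero polynomial of degree $n$ in $e^{2\pi i x}$ has at most $n$ zeros on $\mathbb{T}$ and therefore cannot vanish on any support of cardinality exceeding $n$.
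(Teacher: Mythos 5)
A preliminary remark: the paper does not prove Theorem \ref{T:reproduce} at all --- it is quoted from \cite{HW15} --- so the only thing to compare against is the computation the paper sketches immediately after the statement. Your existence half is correct and is essentially that argument: extend $f$ by zero to $\tilde f=f\mathbf{1}_A\in L^2(\lambda)$, apply \eqref{gndextro} in $L^2(\lambda)$ (legitimate, since $\lambda$ is again a singular probability measure), use $\nu(A)=0$ and $\mu(B)=0$ to identify $\langle\tilde f,h_n\rangle_\lambda=\langle f,\eta h_n\rangle_\mu$, and then split $\lVert\tilde f-S_N\rVert_\lambda^2=\eta\lVert f-S_N\rVert_\mu^2+(1-\eta)\lVert S_N\rVert_\nu^2$ to convert $L^2(\lambda)$-convergence into $L^2(\mu)$-convergence using $\eta>0$. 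You correctly identify that this last step is the real content (the partial sums do carry $\nu$-mass), and your identity handles it cleanly. I would accept this half as written.

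The uniqueness half has a genuine gap: you add the hypothesis that $\mu$ has infinite support, which is not in the theorem. A singular probability measure may be a finite sum of point masses, and then a nonzero trigonometric polynomial of degree $\geq$ the cardinality of the support can vanish $\mu$-a.e., so your identification of $h_n$ with $h_n'$ as polynomials, and hence the induction on Fourier moments, breaks down beyond that degree. The hypothesis is also unnecessary. Equation \eqref{gnformula} gives the pointwise telescoping identity $h_n-e_1h_{n-1}=\overline{\alpha_n}e_0$ (the same identity the paper exploits in the proof of Proposition \ref{Jtheorem}), so $\eta h_n=\eta' h_n'$ in $L^2(\mu)$ for all $n$ forces $\eta\overline{\alpha_n}=\eta'\overline{\alpha_n'}$ $\mu$-a.e., hence as scalars; since $\alpha_0=\alpha_0'=1$ the case $n=0$ gives $\eta=\eta'$ and then $\alpha_n=\alpha_n'$ for every $n$. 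The sequence $\{\alpha_n\}$ determines $C_\lambda 1(z)=\bigl(\sum_{n}\alpha_nz^n\bigr)^{-1}$ by \eqref{InnerByUCtransform}, hence all nonnegative Fourier coefficients of $\lambda$, hence all of them by conjugate symmetry, hence $\lambda$ itself. This argument needs only $\mu\neq 0$ and should replace your zero-counting step.
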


It is worth noting that since the sequence $\{h_n\}$ is a Parseval frame in $L^2(\lambda)$, for any $f\in L^2(\mu)$, with $\tilde{f}=f\cdot\chi_{\text{supp}(\mu)}\in L^2(\lambda)$, we have
\[ \sum_{n=0}^{\infty}\lvert\langle f,\eta h_n\rangle_\mu\rvert^2 = \sum_{n=0}^{\infty}\left\lvert\left\langle\tilde{f},h_n\right\rangle_{\lambda}\right\rvert^2 
=\left\lVert\tilde{f}\right\rVert^2_{\lambda} 
=\eta\left\lVert f\right\rVert^2_{\mu}.
\]

Hence, $\{\eta h_n\}$ is a tight frame in $L^2(\mu)$ with frame bound $\eta$. If $\eta=1$, then $\{\eta h_n\}=\{g_n\}$ is the canonical sequence associated to $\mu$ by $\eqref{gs}$, a Parseval frame that by $\eqref{gndextro}$ is a dextrodual of $\{e_n\}$ in $L^2(\mu)$. When $\eta<1$, this theorem gives us a number of other dextroduals of $\{e_n\}$ that, while not Parseval frames, are tight frames. These dextroduals then provide us with reproducing kernels analogous to Theorem \ref{DRkernel}.

\begin{theorem}\label{kernelthm}Let $\mu$ be a Borel measure on $[0,1)$. Let $\{h_n\}\subset L^2(\mu)$ be a Bessel sequence that is dextrodual to $\{e_n\}$. Then for each fixed $z\in\mathbb{D}$,
$$K_z(w):=\sum_{m}\sum_{n}\langle h_n,h_m\rangle_\mu\overline{z}^nw^m$$
is a well-defined function on $\mathbb{D}$. $K_z(w)\in H^2$ and possesses an $L^2(\mu)$-boundary function $K^\ast_z$. Moreover,
$$K_z(w)=\langle K^\ast_z,K^\ast_w\rangle_{\mu},$$
and thus $K \in \mathcal{K}(\mu)$.
\end{theorem}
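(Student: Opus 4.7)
The plan is to introduce the auxiliary element $H_z := \sum_{n=0}^{\infty}\overline{z}^{\,n} h_n \in L^2(\mu)$ and to recognize the candidate kernel as the Gram element $K_z(w) = \langle H_z, H_w\rangle_{\mu}$. Once this identification is in place, the positive-matrix property and the reproducing formula against $d\mu$ follow almost immediately; the substantive content is showing that $K_z$, viewed as an analytic function of $w$, lies in $H^2$ and admits $H_z$ itself as its $L^2(\mu)$-boundary.

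First, since $\{h_n\}$ is Bessel, the synthesis operator $\{c_n\}\mapsto\sum c_n h_n$ is bounded from $\ell^2$ into $L^2(\mu)$. Because $\{\overline{z}^{\,n}\}_{n\geq 0}\in\ell^2$ for every $z\in\mathbb{D}$, the series defining $H_z$ converges in $L^2(\mu)$. Continuity of the inner product then justifies interchanging the order of summation and writing $K_z(w)=\langle H_z,H_w\rangle_{\mu}$, which simultaneously shows that $K$ is well-defined and that the matrices $(K_{\zeta_j}(\zeta_i))$ are Gramians, hence positive semidefinite.

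Next, set $a_m(z):=\langle H_z, h_m\rangle_{\mu}$. The Bessel bound applied to $H_z$ gives $\sum_m |a_m(z)|^2 \leq B\,\lVert H_z\rVert_\mu^2<\infty$, so $K_z(w)=\sum_m a_m(z)w^m$ defines an element of $H^2$ in the variable $w$. Moreover, because $\{h_n\}$ is dextrodual to $\{e_n\}$, applying the dextrodual expansion to $H_z\in L^2(\mu)$ yields
\[
H_z = \sum_{m=0}^{\infty}\langle H_z, h_m\rangle_{\mu}\, e_m = \sum_{m=0}^{\infty} a_m(z)\, e_m
\]
with convergence in $L^2(\mu)$.

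The final step is to identify the radial limit of $K_z$ with $H_z$. Since the partial sums of the series above converge in $L^2(\mu)$, a standard Abel-summation-by-parts argument valid in any Banach space shows that the Abel means $\sum_m r^m a_m(z) e_m$ also converge to $H_z$ in $L^2(\mu)$ as $r\to 1^-$. But these Abel means are precisely $K_z(re^{2\pi i\cdot})$, so $K_z^*=H_z$. Combining this with the first step gives $\langle K_z^*, K_w^*\rangle_{\mu}=\langle H_z, H_w\rangle_{\mu}=K_z(w)$, the desired reproducing property, and hence $K\in\mathcal{K}(\mu)$. The main obstacle is precisely the passage between the analytic radial limit of $K_z$ and the $L^2(\mu)$-limit of the dextrodual expansion: this is where dextroduality of $\{h_n\}$ against $\{e_n\}$ is indispensable, as it is the only mechanism that forces the boundary function to coincide with $H_z$ rather than some other element of $L^2(\mu)$.
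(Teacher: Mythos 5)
Your proposal is correct and follows essentially the same route as the paper's proof: Besselness gives convergence of $H_z=\sum_n\overline{z}^n h_n$ in $L^2(\mu)$ and the $\ell^2$ bound on the coefficients $\langle H_z,h_m\rangle$ placing $K_z$ in $H^2$, dextroduality gives the expansion $H_z=\sum_m\langle H_z,h_m\rangle e_m$, and Abel summability of that norm-convergent series identifies $K_z^\ast=H_z$, from which the reproducing identity is the Gram computation. Your explicit remark that the finite matrices are Gramians (hence positive semidefinite) is a small addition the paper leaves implicit.
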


\begin{proof}Fix $z\in\mathbb{D}$. Let $N\in\mathbb{N}_0$, and suppose $n>m\geq N$. Then since $\{h_n\}$ is Bessel, we have
$$\left\lVert\sum_{k=0}^{n}\overline{z}^kh_k-\sum_{k=0}^{m}\overline{z}^kh_k\right\rVert_{\mu}=\left\lVert\sum_{k=m+1}^{n}\overline{z}^kh_k\right\rVert_{\mu}\leq B\sqrt{\sum_{k=m+1}^{n}\lvert z\rvert^{2k}}\leq B\sqrt{\sum_{k=N}^{\infty}\lvert z\rvert^{2k}}.$$
As $N\rightarrow\infty$, the right side goes to $0$, which shows that the sequence $\displaystyle\left\{\sum_{k=0}^{n}\overline{z}^kh_k\right\}_n$ is Cauchy and hence convergent in $L^2(\mu)$. By continuity of the inner product in $L^2(\mu)$, we then have
\begin{align*}K_z(w)&:=\sum_{m}\sum_{n}\langle h_n,h_m\rangle\overline{z}^nw^m\\
&=\sum_{m}\left\langle\sum_{n}\overline{z}^n h_n,h_m\right\rangle w^m.\end{align*}
Observe that since $\{h_n\}$ is Bessel,
\begin{align*}\sum_{m=0}^{\infty}\left\lvert\left\langle\sum_{n}\overline{z}^n h_n,h_m\right\rangle\right\rvert^2\leq B^\prime\left\lVert\sum_{n}\overline{z}^nh_n\right\rVert^2_{\mu}<\infty,\end{align*}
which shows that $K_z(w)\in H^2$. Define $K_z^\ast\in L^2(\mu)$ by $K_z^\ast=\sum_{n}\overline{z}^nh_n$. Because $\{h_n\}$ is dextrodual to $\{e_n\}$, we have
$$K_z^\ast:=\sum_{n}\overline{z}^nh_n=\sum_{m}\left\langle\sum_{n}\overline{z}^nh_n,h_m\right\rangle e_m.$$
A summable series in a normed linear space is Abel summable. Hence, for all $0<r\leq1$, we have that
$$\sum_{m}r^m\left\langle\sum_{n}\overline{z}^nh_n,h_m\right\rangle e_m$$
converges in $L^2(\mu)$, and
\begin{align*}&\lim_{r\rightarrow1^{-}}\left\lVert\sum_{m}\left\langle\sum_{n}\overline{z}^nh_n,h_m\right\rangle e_m-\sum_{m}r^m\left\langle\sum_{n}\overline{z}^nh_n,h_m\right\rangle e_m\right\rVert_{\mu}\\
&=\lim_{r\rightarrow1^{-}}\left\lVert\sum_{n}\overline{z}^nh_n-\sum_{m}r^m\left\langle\sum_{n}\overline{z}^nh_n,h_m\right\rangle e_m\right\rVert_{\mu}\\
&=0.\end{align*}
Since
$$K_z(re^{2\pi ix})=\sum_{m}\left\langle\sum_{n}\overline{z}^nh_n,h_m\right\rangle r^me^{2\pi imx},$$
the above shows that for each $0<r<1$, $K_z(re^{2\pi ix})\in L^2(\mu)$ with respect to the variable $x$, and $K_z^\ast$ is an $L^2(\mu)$-boundary function of $K_z(w)$. We compute that
\[ \langle K_z^\ast,K_w^\ast\rangle=\left\langle\sum_{n}\overline{z}^nh_n,\sum_{m}\overline{w}^mh_m\right\rangle
=\sum_{m}\sum_{n}\left\langle h_n,h_m\right\rangle\overline{z}^nw^m
=K_z(w).
\]
\end{proof}

The following result generalizes the construction in Theorem B and gives us dextroduals of $\{e_n\}$ that are also frames:

\begin{theorem}\label{DDfromAC}Suppose $\mu$ and $\lambda$ are singular probability measures on $[0,1)$ such that $\mu<<\lambda$, and suppose there exist constants $A$ and $B$ such that
$$0<A\leq\frac{d\mu}{d\lambda}\leq B$$
on $\text{supp}\left(\frac{d\mu}{d\lambda}\right):=\left\{x\in[0,1)~|~\frac{d\mu}{d\lambda}(x)\neq0\right\}$.
If $\{h_n\}$ is the canonical sequence associated to $\lambda$ by $\eqref{gs}$, then $\left\{\frac{h_n}{\frac{d\mu}{d\lambda}}\right\}$ is dextrodual to $\{e_n\}$ in $L^2(\mu)$. Moreover, $\left\{\frac{h_n}{\frac{d\mu}{d\lambda}}\right\}$ is a frame in $L^2(\mu)$ with bounds no worse than $\frac{1}{B}$ and $\frac{1}{A}$.\end{theorem}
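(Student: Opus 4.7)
The plan is to reduce everything to the known Parseval/Kaczmarz identity \eqref{gndextro} for $\{h_n\}$ in $L^2(\lambda)$ by transporting an arbitrary $f\in L^2(\mu)$ to an element of $L^2(\lambda)$ via the Radon-Nikodym derivative $\rho:=d\mu/d\lambda$. Set $E=\mathrm{supp}(\rho)$ and, for $f\in L^2(\mu)$, let $\tilde{f}:=f\chi_E$. The lower bound $A\leq\rho$ on $E$ ensures that $\tilde{f}\in L^2(\lambda)$, because
\[
\int|\tilde{f}|^2\,d\lambda=\int_E|f|^2\rho^{-1}\,d\mu\leq A^{-1}\|f\|_{\mu}^2,
\]
and a parallel estimate shows each $h_n/\rho$ lies in $L^2(\mu)$.

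Next I would observe the crucial identity
\[
\bigl\langle f,\tfrac{h_n}{\rho}\bigr\rangle_{\mu}=\int_E f\,\overline{h_n}\,d\lambda=\langle\tilde{f},h_n\rangle_{\lambda},
\]
obtained by unpacking the definition of $\langle\cdot,\cdot\rangle_\mu$ as integration against $\rho\,d\lambda$. Since $\lambda$ is a singular probability measure, \eqref{gndextro} applied in $L^2(\lambda)$ gives
\[
\tilde{f}=\sum_{n=0}^{\infty}\langle\tilde{f},h_n\rangle_{\lambda}e_n
=\sum_{n=0}^{\infty}\bigl\langle f,\tfrac{h_n}{\rho}\bigr\rangle_{\mu}e_n
\]
with convergence in $L^2(\lambda)$.

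To finish the dextroduality claim I must promote this convergence to $L^2(\mu)$. This is where the upper bound $\rho\leq B$ enters: for any $g\in L^2(\lambda)$,
\[
\|g\|_\mu^2=\int|g|^2\rho\,d\lambda\leq B\|g\|_\lambda^2.
\]
Applying this to the tails of the Kaczmarz series gives norm convergence in $L^2(\mu)$ to $\tilde{f}$, which agrees with $f$ as an element of $L^2(\mu)$. For the frame bounds, the fact that $\{h_n\}$ is a Parseval frame in $L^2(\lambda)$ yields
\[
\sum_{n=0}^{\infty}\bigl|\langle f,\tfrac{h_n}{\rho}\rangle_{\mu}\bigr|^2=\sum_{n=0}^{\infty}|\langle\tilde{f},h_n\rangle_{\lambda}|^2=\|\tilde{f}\|_\lambda^2=\int_E|f|^2\rho^{-1}\,d\mu,
\]
and the two-sided bound $A\leq\rho\leq B$ on $E$ sandwiches this between $B^{-1}\|f\|_\mu^2$ and $A^{-1}\|f\|_\mu^2$.

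The argument is essentially bookkeeping with $\rho$, so there is no deep obstacle; the only points requiring mild care are the two directions in which the bounds are used. The lower bound $A>0$ is what makes the lift $f\mapsto\tilde{f}$ land inside $L^2(\lambda)$ (and dually makes each $h_n/\rho$ lie in $L^2(\mu)$), while the upper bound $B<\infty$ is what allows norm convergence in $L^2(\lambda)$ to be demoted to norm convergence in $L^2(\mu)$. Without either one-sided bound the correspondence between $L^2(\mu)$ and a subspace of $L^2(\lambda)$ would cease to be bicontinuous, and the proof would break.
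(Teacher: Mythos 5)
Your proposal is correct and follows essentially the same route as the paper: lift $f$ to $\tilde f = f\chi_E \in L^2(\lambda)$, establish the identity $\langle f, h_n/\tfrac{d\mu}{d\lambda}\rangle_\mu = \langle \tilde f, h_n\rangle_\lambda$, invoke the Parseval/Kaczmarz identity for $\{h_n\}$ in $L^2(\lambda)$, and use the bound $\tfrac{d\mu}{d\lambda}\le B$ to transfer the norm convergence of the partial sums from $L^2(\lambda)$ to $L^2(\mu)$, with the two-sided bounds giving the frame constants $1/B$ and $1/A$. The only cosmetic difference is the order in which the integral estimates are arranged; the substance is identical.
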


\begin{proof}Let $M:=\text{supp}\left(\frac{d\mu}{d\lambda}\right)$, and define $\tilde{f}:=f\cdot\chi_{M}$. First, we observe that
\begin{align*}\int_{[0,1)}\left\lvert \frac{h_n}{\frac{d\mu}{d\lambda}}\right\rvert^2\,d\mu
=\int_{M}\frac{\lvert h_n\rvert^2}{\left(\frac{d\mu}{d\lambda}\right)^2}\frac{d\mu}{d\lambda}\,d\lambda
\leq\int_{M}\frac{\lvert h_n\rvert^2}{A}\,d\lambda
<\infty.
\end{align*}
This shows that $\left\{\frac{h_n}{\frac{d\mu}{d\lambda}}\right\}\subset L^2(\mu)$. Now, suppose $f\in L^2(\mu)$. Then
\begin{align*}&A\int_{[0,1)}\lvert\tilde{f}\rvert^2\,d\lambda
\leq\int_{[0,1)}\left\lvert\tilde{f}\right\rvert^2\frac{d\mu}{d\lambda}\,d\lambda
\leq B\int_{[0,1)}\lvert\tilde{f}\rvert^2\,d\lambda.
\end{align*}
Therefore,
$$\frac{1}{B}\int_{[0,1)}\lvert f\rvert^2\,d\mu\leq\int_{[0,1)}\left\lvert\tilde{f}\right\rvert^2\,d\lambda\leq\frac{1}{A}\int_{[0,1)}\lvert f\rvert^2\,d\mu<\infty.$$
Thus, $f\in L^2(\mu)\implies\tilde{f}\in L^2(\lambda)$. Now, we compute that for any $f\in L^2(\mu)$,
\begin{align*}\left\langle f,\frac{h_n}{\frac{d\mu}{d\lambda}}\right\rangle_\mu
=\int_{[0,1)}f\cdot\overline{\left(\frac{h_n}{\frac{d\mu}{d\lambda}}\right)}\frac{d\mu}{d\lambda}\,d\lambda
=\int_{[0,1)}\tilde{f}\cdot\overline{h_n}\,d\lambda
=\langle\tilde{f},h_n\rangle_\lambda.
\end{align*}
Then because $\{h_n\}$ is a Parseval frame in $L^2(\lambda)$, the previous two computations show that
\begin{align}\label{Jequivnorm}
\frac{1}{B}\lVert f\rVert_{\mu}^2\leq\lVert\tilde{f}\rVert_{\lambda}^2=\sum_{n=0}^{\infty}\lvert\langle \tilde{f},h_n\rangle_\lambda\rvert^2=\sum_{n=0}^{\infty}\left\lvert\left\langle f,\frac{h_n}{\frac{d\mu}{d\lambda}}\right\rangle_\mu\right\rvert^2\leq\frac{1}{A}\lVert f\rVert_\mu^2,
\end{align}
and hence that $\left\{\frac{h_n}{\frac{d\mu}{d\lambda}}\right\}$ is a frame in $L^2(\mu)$ with bounds no worse than $\frac{1}{B}$ and $\frac{1}{A}$.
Now, we have that
\begin{align*}
\lim_{k\rightarrow\infty}\left\lVert f-\sum_{n=0}^{k}\left\langle f,\frac{h_n}{\frac{d\mu}{d\lambda}}\right\rangle_\mu e_n\right\rVert_{\mu}^2
&=\lim_{k\rightarrow\infty}\int_{[0,1)}\left\lvert \tilde{f}-\sum_{n=0}^{k}\left\langle \tilde{f},h_n\right\rangle_\lambda e_n\right\rvert^2\frac{d\mu}{d\lambda}\,d\lambda\\
&\leq B \lim_{k\rightarrow\infty}\left\lVert\tilde{f}-\sum_{n=0}^{k}\left\langle \tilde{f},h_n\right\rangle_\lambda e_n\right\rVert^2_{\lambda}\\
&=0,
\end{align*}
which shows that $\left\{\frac{h_n}{\frac{d\mu}{d\lambda}}\right\}$ is dextrodual to $\{e_n\}$ in $L^2(\mu)$.
\end{proof}

Combining Theorems \ref{kernelthm} and \ref{DDfromAC}, we obtain kernels which are in $\mathcal{K}(\mu)$.  These kernels, however, are not the restriction of the Szeg\H{o} kernel on some subspace of $H^2$, as we shall now demonstrate.

\begin{proposition}\label{Jtheorem}Assume the hypotheses of Theorem \ref{DDfromAC}. The set
\begin{equation} \label{eq:jlambda}
J(\lambda)=\left\{\sum_{n=0}^{\infty}\left\langle f,\frac{h_n}{\frac{d\mu}{d\lambda}}\right\rangle_{\mu}z^n~\middle|~f\in L^2(\mu)\right\}
\end{equation}
is a closed linear subspace of $H^2$. If $\lambda\left([0,1)\setminus\text{supp}\left(\frac{d\mu}{d\lambda}\right)\right)>0$, then $J$ is not backward-shift-invariant. If $\lambda\left([0,1)\setminus\text{supp}\left(\frac{d\mu}{d\lambda}\right)\right)=0$, then $J=\mathcal{H}(\lambda)$.
\end{proposition}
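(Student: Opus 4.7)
The plan is to identify $J(\lambda)$ with the image under $V_\lambda$ of a natural closed subspace of $L^2(\lambda)$, and then to analyze its structure via the $L^2(\lambda)$-boundary correspondence of Proposition \ref{muboundprop}. Let $M := \mathrm{supp}(d\mu/d\lambda)$ and, for $f \in L^2(\mu)$, set $\tilde f := f\chi_M$. The computation already carried out in the proof of Theorem \ref{DDfromAC} gives $\langle f, h_n/(d\mu/d\lambda)\rangle_\mu = \langle \tilde f, h_n\rangle_\lambda$, and Theorem \ref{PapOneMain} (applied to $\lambda$ with Kaczmarz sequence $\{h_n\}$) then yields
\[ \sum_{n=0}^{\infty}\left\langle f,\frac{h_n}{d\mu/d\lambda}\right\rangle_{\mu}z^n \;=\; V_\lambda \tilde f(z). \]
The two-sided bound $A \le d\mu/d\lambda \le B$ on $M$ shows that $f\mapsto \tilde f$ is a bijection of $L^2(\mu)$ onto the closed subspace $L^2(\lambda)_M := \{g\in L^2(\lambda) : g = g\chi_M\}$ of $L^2(\lambda)$. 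Hence $J(\lambda) = V_\lambda(L^2(\lambda)_M)$, and since $V_\lambda$ is an isometry onto $\mathcal{H}(\lambda)$, $J(\lambda)$ is a closed subspace of $H^2$, proving the first assertion. The third assertion is immediate: if $\lambda(M^c)=0$, then $L^2(\lambda)_M = L^2(\lambda)$, so $J(\lambda) = V_\lambda(L^2(\lambda)) = \mathcal{H}(\lambda)$.

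For the second assertion, suppose $\lambda(M^c) > 0$ and assume for contradiction that $J(\lambda)$ is backward-shift invariant. By Proposition \ref{muboundprop}, $V_\lambda^{-1}F = F^*_\lambda$, so $F \in J(\lambda)$ if and only if $F^*_\lambda$ vanishes $\lambda$-a.e.\ on $M^c$. The auxiliary fact I would establish is the boundary formula
\[ (S^\ast F)^\ast_\lambda(x) = e^{-2\pi i x}\bigl(F^\ast_\lambda(x) - F(0)\bigr) \qquad\text{for all } F \in \mathcal{H}(\lambda), \]
which follows by writing $S^\ast F(re^{2\pi i x}) = e^{-2\pi i x}(F(re^{2\pi i x}) - F(0))/r$ and splitting the $r \to 1^-$ limit via the triangle inequality into a piece controlled by $\|F(re^{2\pi i \cdot}) - F^\ast_\lambda\|_\lambda \to 0$ and a piece handled by dominated convergence (noting $\mathcal{H}(\lambda)$ is $S^\ast$-invariant, so the limit exists in $L^2(\lambda)$).

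Granting this formula, if both $F$ and $S^\ast F$ lie in $J(\lambda)$ then $(S^\ast F)^\ast_\lambda$ vanishes on $M^c$; but $F^\ast_\lambda$ also vanishes there, so we must have $F(0) = 0$ (using $\lambda(M^c) > 0$). Thus backward-shift invariance would force $F(0) = 0$ for every $F \in J(\lambda)$. To produce a contradiction, take $f \equiv 1 \in L^2(\mu)$ and set $F := V_\lambda\chi_M \in J(\lambda)$; then $F(0) = \langle \chi_M, h_0\rangle_\lambda = \lambda(M)$, and the relation $1 = \mu([0,1)) = \int_M \tfrac{d\mu}{d\lambda}\,d\lambda \le B\lambda(M)$ gives $\lambda(M) \ge 1/B > 0$, contradicting $F(0) = 0$. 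The only genuinely technical step is the boundary formula for $S^\ast F$; everything else amounts to tracking what $V_\lambda$ does to the subspace $L^2(\lambda)_M$.
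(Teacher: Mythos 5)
Your proposal is correct and follows essentially the same route as the paper: both identify $J(\lambda)$ with the $V_\lambda$-image of the functions in $L^2(\lambda)$ supported on $M$ (the paper phrases closedness via the norm equivalence \eqref{Jequivnorm}, which amounts to the same isometry argument), and both refute backward-shift invariance by showing that the $L^2(\lambda)$-boundary of $S^\ast F$ for $F = V_\lambda\chi_M$ fails to vanish $\lambda$-a.e.\ off $M$. The only real difference is that the paper derives the key identity $(S^\ast F)^\ast_\lambda = e^{-2\pi i x}\bigl(F^\ast_\lambda - F(0)\bigr)$ by algebra on the Kaczmarz coefficients (via $e_1 h_n = h_{n+1}-\overline{\alpha_{n+1}}e_0$), whereas you obtain it directly from the radial-limit definition of the boundary --- a valid and arguably cleaner shortcut.
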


\begin{proof}By Theorem \ref{DDfromAC}, $\left\{\frac{h_n}{\frac{d\mu}{d\lambda}}\right\}$ is a frame in $L^2(\mu)$, so that $J(\lambda)$ is a subset of $H^2$. It is clearly a linear subspace of $H^2$ by sesquilinearity of the inner product in $L^2(\mu)$. By Equation \eqref{Jequivnorm}, 
$$\lVert f\rVert_\mu^2\simeq\left\lVert\sum_{n=0}^{\infty}\left\langle f,\frac{h_n}{\frac{d\mu}{d\lambda}}\right\rangle_\mu z^n\right\rVert_{H^2}^2.$$ Thus, $J(\lambda)$ is a closed subset of $H^2$ by virtue of $L^2(\mu)$ being closed.

Now, let $S^\ast$ denote the backward shift operator acting on $H^2$. Let $\{\alpha_n\}$ be the sequence from \eqref{gnformula} satisfying $h_n=\sum_{i=0}^{n}\overline{\alpha_{n-i}}e_i$. Observe that for all $n\in\mathbb{N}_0$,
$$e_1h_n=e_1\sum_{i=0}^{n}\overline{\alpha_{n-i}}e_i=\sum_{i=0}^{n}\overline{\alpha_{n-i}}e_{i+1}=\sum_{i=1}^{n+1}\overline{\alpha_{n+1-i}}e_i=h_{n+1}-\overline{\alpha_{n+1}}e_0.$$
For any $f\in L^2(\lambda)$, it is trivial to see that
$$\frac{f-\langle f,e_0\rangle_{\lambda}e_0}{e_1}\in L^2(\lambda).$$
We compute that
\begin{align*}\left\langle \frac{f-\langle f,e_0\rangle_{\lambda}e_0}{e_1},h_n\right\rangle_{\lambda}
&=\langle f,e_1h_n\rangle_{\lambda}-\langle f,e_0\rangle_{\lambda}\langle e_0,e_1 h_n\rangle_{\lambda}\\
&=\langle f,h_{n+1}\rangle_{\lambda}-\alpha_{n+1}\langle f,e_0\rangle_{\lambda}-\langle f,e_0\rangle_{\lambda}\langle e_0,h_{n+1}\rangle_{\lambda}+\langle f,e_0\rangle_{\lambda}\alpha_{n+1}\langle e_0,e_0\rangle_{\lambda}\\
&=\langle f,h_{n+1}\rangle_{\lambda},\end{align*}
because $\langle e_0,e_0\rangle_{\lambda}=1$ and $\langle e_0,h_{n+1}\rangle_{\lambda}=0$ for all $n\geq0$. Thus,
$$S^\ast\left(\sum_{n=0}^{\infty}\langle f,h_n\rangle_{\lambda}z^n\right)=\sum_{n=0}^{\infty}\left\langle \frac{f-\langle f,e_0\rangle_{\lambda}e_0}{e_1},h_n\right\rangle_{\lambda}z^n.$$
As before, let $M=\text{supp}\left(\frac{d\mu}{d\lambda}\right)$. For any $f\in L^2(\mu)$, $\tilde{f}=f\cdot\chi_{M}$ is the unique member of $L^2(\lambda)$ such that $\left\langle f,\frac{h_n}{\frac{d\mu}{d\lambda}}\right\rangle_{\mu}=\langle\tilde{f}, h_n\rangle_{\lambda}$ for all $n\geq0$. We therefore have
\begin{equation}\label{backwardshiftexpo}S^\ast\left(\sum_{n=0}^{\infty}\left\langle f,\frac{h_n}{\frac{d\mu}{d\lambda}}\right\rangle_{\mu}z^n\right)=\sum_{n=0}^{\infty}\left\langle\frac{\tilde{f}-\left\langle\tilde{f},e_0\right\rangle_{\lambda}e_0}{e_1},h_n\right\rangle_{\lambda} z^n.\end{equation}
Observe that on $[0,1)\setminus M$,
$$\frac{\tilde{f}-\left\langle\tilde{f},e_0\right\rangle_{\lambda}e_0}{e_1}=-\left\langle\tilde{f},e_0\right\rangle_{\lambda}e_{-1}.$$
Let us examine the particular case in which $f=e_0\in L^2(\mu)$. We have
$$\left\langle\tilde{f},e_0\right\rangle_{\lambda}=\lambda(M),$$
so that on $[0,1)\setminus M$,
$$\frac{\tilde{f}-\left\langle\tilde{f},e_0\right\rangle_{\lambda}e_0}{e_1}=-\lambda(M) e_{-1}.$$
Since $\lambda(M)>0$, $-\lambda(M) e_{-1}=0$ $\lambda$-a.e.~on $[0,1)\setminus M$ if and only if $\lambda([0,1)\setminus M)=0$. It follows that if $\lambda([0,1)\setminus M)>0$, there does not exist $w\in L^2(\mu)$ such that
$$\frac{\tilde{f}-\left\langle\tilde{f},e_0\right\rangle_{\lambda}e_0}{e_1}=w\cdot\chi_{M}=\tilde{w}$$
in $L^2(\lambda)$. Hence, if $\lambda([0,1)\setminus M)>0$, then $J(\lambda)$ is not backward-shift-invariant.

If $\lambda([0,1)\setminus M)=0$, then it is easy to see that for all $f\in L^2(\mu)$, $\tilde{f}=f$ in $L^2(\lambda)$, and
$$A\lVert f\rVert_{\lambda}^2\leq\lVert f\rVert_{\mu}^2\leq B\lVert f\rVert_{\lambda}^2,$$
so that $L^2(\lambda)=L^2(\mu)$ as sets of functions. Thus
$$J(\lambda)=\left\{\sum_{n=0}^{\infty}\left\langle f,\frac{h_n}{\frac{d\mu}{d\lambda}}\right\rangle_{\mu}z^n~\middle|~f\in L^2(\mu)\right\}=\left\{\sum_{n=0}^{\infty}\left\langle f,h_n\right\rangle_{\lambda}z^n~\middle|~f\in L^2(\lambda)\right\}=\mathcal{H}(\lambda).$$
\end{proof}

The following lemma is a consequence of Abel summability.

\begin{lemma} \label{L:Jboundary}
Every $F \in J(\lambda)$ possesses an $L^2(\mu)$-boundary; indeed, if 
\[ F(z) = \sum_{n=0}^{\infty}\left\langle f,\frac{h_n}{\frac{d\mu}{d\lambda}}\right\rangle_{\mu}z^n, \]
then $f$ is the $L^2(\mu)$-boundary of $F$.
\end{lemma}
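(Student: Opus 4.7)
The plan is to deduce the boundary-convergence statement from the dextrodual identity of Theorem \ref{DDfromAC} by recognizing $F(re^{2\pi ix})$ as the Abel means of a series that converges to $f$ in $L^2(\mu)$. This mirrors the argument already used in Proposition \ref{muboundprop} and in the proof of Theorem \ref{kernelthm}, where Abel summability of norm-convergent series in a Banach space is the central tool.

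First, by the definition \eqref{eq:jlambda} of $J(\lambda)$, pick $f\in L^2(\mu)$ such that
\[
F(z)=\sum_{n=0}^{\infty} c_n z^n, \qquad c_n := \left\langle f,\tfrac{h_n}{\frac{d\mu}{d\lambda}}\right\rangle_{\mu}.
\]
Theorem \ref{DDfromAC} gives two facts I will use: the sequence $\{c_n\}$ lies in $\ell^2$ (from the frame inequality \eqref{Jequivnorm}), and the dextrodual identity
\[
\sum_{n=0}^{\infty} c_n e_n = f
\]
holds with convergence in $L^2(\mu)$.

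Next, I would fix $0<r<1$ and rewrite $F(re^{2\pi ix})$ as a series in $L^2(\mu)$. Because $\{c_n\}\in\ell^2$, Cauchy--Schwarz gives $\sum_{n}|c_n|r^n<\infty$, so $\sum_n c_n r^n e_n(x)$ converges absolutely and uniformly in $x$ to the bounded function $F(re^{2\pi ix})$. Since $\mu$ is finite and $\|e_n\|_\infty=1$, this uniform convergence upgrades to convergence in $L^2(\mu)$ to the same limit. Thus in $L^2(\mu)$
\[
F(re^{2\pi ix}) = \sum_{n=0}^{\infty} c_n r^n e_n.
\]

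Finally, I would invoke Abel's theorem in the Banach space $L^2(\mu)$: if $\sum a_n$ converges to $a$ in a Banach space, then $\sum r^n a_n \to a$ as $r \to 1^-$. Applied to $a_n = c_n e_n$ (for which $\sum a_n = f$ in $L^2(\mu)$ by the dextrodual identity), this yields
\[
\lim_{r\to 1^-}\|F(re^{2\pi ix}) - f(x)\|_\mu = \lim_{r\to 1^-}\Bigl\|\sum_{n=0}^{\infty} c_n r^n e_n - \sum_{n=0}^{\infty} c_n e_n\Bigr\|_\mu = 0,
\]
which is exactly the statement that $f$ is the $L^2(\mu)$-boundary of $F$. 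There is no real obstacle here; the only thing to double-check is the identification of $F(re^{2\pi ix})$ with the Abel sum in $L^2(\mu)$, but this is immediate from $\{c_n\}\in\ell^2$ and the finiteness of $\mu$.
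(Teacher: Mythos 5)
Your proof is correct and follows exactly the route the paper intends: the paper gives no written proof of this lemma beyond the remark that it ``is a consequence of Abel summability,'' and your argument fills in the details in precisely the same way the paper proves Proposition \ref{muboundprop} and the boundary claim inside Theorem \ref{kernelthm} (dextrodual identity from Theorem \ref{DDfromAC}, identification of $F(re^{2\pi ix})$ with the Abel means via the $\ell^2$ bound, then Abel summability of norm-convergent series in a Banach space).
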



\begin{definition}
For $F,G \in J(\lambda)$, define the inner product as:
\begin{equation} \label{Eq:Jinner}
\langle F,G\rangle_{J}:=\int_{0}^{1}F^{*}(x)\overline{G^{*}(x)}\,d\mu(x).
\end{equation}
\end{definition}

\begin{corollary}\label{Jspacethm}The space $J(\lambda)$ in Equation (\ref{eq:jlambda}), when equipped with the inner product in Equation (\ref{Eq:Jinner}), is a reproducing kernel Hilbert space with kernel
\begin{equation*}K_z(w):=\sum_{m}\sum_{n}\left\langle\frac{h_n}{\frac{d\mu}{d\lambda}}, \frac{h_m}{\frac{d\mu}{d\lambda}}\right\rangle_\mu\overline{z}^nw^m,\end{equation*}
and the norm of this space is equivalent to the usual Hardy space norm.
\end{corollary}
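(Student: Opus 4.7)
The plan is to combine Theorem \ref{kernelthm} applied to the frame $\left\{\frac{h_n}{\frac{d\mu}{d\lambda}}\right\}$ (which by Theorem \ref{DDfromAC} is a Bessel sequence dextrodual to $\{e_n\}$) with the frame-bound estimate \eqref{Jequivnorm} and the boundary-identification Lemma \ref{L:Jboundary}. All the heavy lifting is already done by those results; what remains is to glue them together correctly, keeping track of which object plays which role.

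First I would establish the equivalence of norms. Given $F\in J(\lambda)$ of the form $F(z)=\sum_n\left\langle f,\frac{h_n}{\frac{d\mu}{d\lambda}}\right\rangle_\mu z^n$ with $f\in L^2(\mu)$, Lemma \ref{L:Jboundary} identifies $F^{*}=f$, so by definition $\lVert F\rVert_J^{2}=\lVert f\rVert_\mu^{2}$. Since the $H^2$-norm of $F$ equals $\sum_n\left\lvert\left\langle f,\frac{h_n}{\frac{d\mu}{d\lambda}}\right\rangle_\mu\right\rvert^{2}$, the frame inequality \eqref{Jequivnorm} yields
\[
A\lVert F\rVert_{H^2}^{2}\leq\lVert F\rVert_J^{2}\leq B\lVert F\rVert_{H^2}^{2}.
\]
Since $J(\lambda)$ is closed in $H^2$ by Proposition \ref{Jtheorem} and the two norms are equivalent, $J(\lambda)$ is complete under $\langle\cdot,\cdot\rangle_J$, hence a Hilbert space. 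Moreover the lower frame bound shows that the assignment $f\mapsto F$ is injective, so $F\mapsto F^{*}$ is an isometric bijection from $J(\lambda)$ onto $L^2(\mu)$.

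Next I would identify the kernel. Applying Theorem \ref{kernelthm} to the Bessel sequence $\left\{\frac{h_n}{\frac{d\mu}{d\lambda}}\right\}$, the function $K_z$ given in the statement lies in $H^2$, its $L^2(\mu)$-boundary is $K_z^{*}=\sum_n\overline{z}^{\,n}\frac{h_n}{\frac{d\mu}{d\lambda}}$, and $K_z(w)=\langle K_z^{*},K_w^{*}\rangle_\mu$. To see $K_z\in J(\lambda)$, expand
\[
K_z(w)=\left\langle K_z^{*},\sum_m\overline{w}^{\,m}\tfrac{h_m}{\frac{d\mu}{d\lambda}}\right\rangle_\mu=\sum_m\left\langle K_z^{*},\tfrac{h_m}{\frac{d\mu}{d\lambda}}\right\rangle_\mu w^m,
\]
which exhibits $K_z$ as the image in $J(\lambda)$ of $f=K_z^{*}\in L^2(\mu)$. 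The reproducing property is then immediate: for $F\in J(\lambda)$ with $F^{*}=f$,
\[
\langle F,K_z\rangle_J=\int_0^1 f(x)\overline{K_z^{*}(x)}\,d\mu(x)=\sum_n z^n\left\langle f,\tfrac{h_n}{\frac{d\mu}{d\lambda}}\right\rangle_\mu=F(z).
\]

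The only real subtlety, and the one I would treat most carefully, is keeping the two uses of the sequence $\left\{\frac{h_n}{\frac{d\mu}{d\lambda}}\right\}$ straight: in Theorem \ref{kernelthm} it is the generic ``Bessel dextrodual'' sequence that produces the kernel $K_z$, while in the definition of $J(\lambda)$ it is the specific dextrodual coming from Theorem \ref{DDfromAC}. Because these are the same sequence, the boundary $K_z^{*}$ produced by Theorem \ref{kernelthm} is automatically the $L^2(\mu)$-function whose associated series gives $K_z$, so the membership $K_z\in J(\lambda)$ and the reproducing identity are consistent. No new estimate is required beyond \eqref{Jequivnorm}.
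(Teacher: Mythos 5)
Your proposal is correct, but it finishes the proof by a genuinely different route than the paper. The setup is the same in both: norm equivalence via \eqref{Jequivnorm}, completeness of $J(\lambda)$ from the isometric bijection $F\mapsto F^{*}$ onto $L^2(\mu)$ (equivalently, closedness in $H^2$ plus norm equivalence), and membership $K_z\in J(\lambda)$ by exhibiting $K_z$ as the series associated to $f=K_z^{*}=\sum_n\overline{z}^{\,n}\frac{h_n}{\frac{d\mu}{d\lambda}}$. The divergence is in how the reproducing property is established on all of $J(\lambda)$. The paper only gets $K_z(w)=\langle K_z^{*},K_w^{*}\rangle_\mu$ from Theorem \ref{kernelthm}, i.e.\ reproduction on the linear span of the kernels, and therefore must show that $\{K_z:z\in\mathbb{D}\}$ is linearly dense in $J(\lambda)$; this occupies the second half of its proof and uses the identity $\sum_n\overline{z}^{\,n}\frac{h_n}{\frac{d\mu}{d\lambda}}=\frac{1}{\frac{d\mu}{d\lambda}}(1-\overline{b(z)})\frac{1}{1-\overline{z}e_1}$ together with Clark-theoretic facts (density of the $\mathcal{H}(d)$ kernel boundaries in $L^2(\mu)$ and $d^{*}=1$ $\mu$-a.e.). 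You instead verify $\langle F,K_z\rangle_J=\langle f,K_z^{*}\rangle_\mu=\sum_n z^n\langle f,\frac{h_n}{\frac{d\mu}{d\lambda}}\rangle_\mu=F(z)$ directly for every $F\in J(\lambda)$, which is legitimate (the interchange of sum and inner product is justified by the $L^2(\mu)$-convergence of $\sum_n\overline{z}^{\,n}\frac{h_n}{\frac{d\mu}{d\lambda}}$ established in Theorem \ref{kernelthm}) and makes the density of the kernels an automatic consequence rather than a needed input. Your argument is shorter and avoids the Clark machinery; the paper's argument buys, as an explicit byproduct, the fact that $J(\lambda)$ coincides with the Moore--Aronszajn space generated by $K$, though that also follows a posteriori from your version.
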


\begin{proof}  By Lemma \ref{L:Jboundary}, the mapping $F \mapsto F^{*}$ is well-defined from $J(\lambda)$ to $L^2(\mu)$, and by definition is a bijection.  When $J(\lambda)$ is equipped with the norm induced by $\langle\cdot,\cdot\rangle_{J}$, the mapping is an isometry.  It follows that  $J(\lambda)$ is complete. 

Let $F\in J(\lambda)$. Observe that by Theorem \ref{DDfromAC},
\begin{equation*}\frac{1}{B}\lVert F\rVert_{J}^2=\frac{1}{B}\lVert F^\ast\rVert_{\mu}^2 
\leq\sum_{n=0}^{\infty}\left\lvert\left\langle F^\ast,\frac{h_n}{\frac{d\mu}{d\lambda}}\right\rangle_{\mu}\right\rvert^2=\lVert F \rVert_{H^2}^2
\leq\frac{1}{A}\lVert F^\ast\rVert_{\mu}^2=\frac{1}{A}\lVert F\rVert_{J}^2,\end{equation*}
showing that $\lVert \cdot\rVert_{J}$ and $\lVert\cdot\rVert_{H^2}$ are equivalent norms on $J(\lambda)$.

Because $\{\frac{h_n}{\frac{d\mu}{d\lambda}}\}$ is a frame on $L^2(\mu)$, it is Bessel on $L^2(\mu)$, and it follows from Theorem \ref{kernelthm} that $K_z(w)$ is well-defined on $\mathbb{D}$, possesses an $L^2(\mu)$-boundary, and reproduces itself with respect to that boundary. For each $z\in\mathbb{D}$, as shown in the proof of Theorem \ref{kernelthm}, $\sum_{n=0}^{\infty}\overline{z}^n\frac{h_n}{\frac{d\mu}{d\lambda}}\in L^2(\mu)$, and
\begin{equation*}K_z(w)=\sum_{m=0}^{\infty}\left\langle\sum_{n=0}^{\infty}\overline{z}^n\frac{h_n}{\frac{d\mu}{d\lambda}},\frac{h_m}{\frac{d\mu}{d\lambda}}\right\rangle_\mu w^m.\end{equation*}
Thus, $K_z(w)\in J(\lambda)$ for each $z\in\mathbb{D}$. 

It remains only to show that $\{K_z(w):z\in\mathbb{D}\}$ is complete in $J(\lambda)$. Let $b$ denote the inner function corresponding to the measure $\lambda$ from Proposition \ref{Jtheorem} (where $\mu=\frac{d\mu}{d\lambda}\,d\lambda$). Since $\{h_n\}$ is the canonical sequence associated to $\lambda$ via the Kaczmarz algorithm in \eqref{gs}, it follows as in the proof of Theorem \ref{DRkernel} that
\begin{equation*}\sum_{n=0}^{\infty}\overline{z}^n\frac{h_n}{\frac{d\mu}{d\lambda}}=\frac{1}{\frac{d\mu}{d\lambda}}(1-\overline{b(z)})\frac{1}{1-\overline{z}e_1},\end{equation*}
where the convergence occurs absolutely in norm. Therefore, if $\left\{\frac{1}{\frac{d\mu}{d\lambda}}\cdot\frac{1}{1-\overline{z}e_1}:z\in\mathbb{D}\right\}$ is linearly dense in $L^2(\mu)$, then by linearity $\{K_z(w):z\in\mathbb{D}\}$ is linearly dense in $J(\lambda)$. So suppose $f\in L^2(\mu)$. Then $V_\mu f(z)\in\mathcal{H}(d)$, where $d$ is the inner function corresponding to $\mu$. The kernel functions of $\mathcal{H}(d)$ are of the form
\begin{equation*}k^{d}_z(w)=\frac{1-\overline{d(z)}d(w)}{1-\overline{z}w}.\end{equation*}
These kernels are, of course, linearly dense in $\mathcal{H}(d)$, and since the norm on $\mathcal{H}(d)$ corresponds to the $L^2(\mu)$-boundary norm, we must have that the boundary functions of the kernels, $\{\left({k_z^d}\right)^\ast:z\in\mathbb{D}\}$, are linearly dense in $L^2(\mu)$. As remarked in \cite{Pol93}, the radial limits of $d$ satisfy $d^\ast(e^{2\pi ix})=1$ for $\mu$-almost all $x$. Thus the $L^2(\mu)$-boundary of $k_z^d(w)$ is
\begin{equation*}(k_z^d)^\ast(x)=\frac{1-\overline{d(z)}}{1-\overline{z}e^{2\pi ix}}.\end{equation*}
Suppressing the constant $1-\overline{d(z)}$, we see that $\left\{\frac{1}{1-\overline{z}e_1}:z\in\mathbb{D}\right\}$ is linearly dense in $L^2(\mu)$. Because $\frac{d\mu}{d\lambda}\cdot f\in L^2(\mu)$ for any $f\in L^2(\mu)$, this implies $\left\{\frac{1}{\frac{d\mu}{d\lambda}}\cdot\frac{1}{1-\overline{z}e_1}:z\in\mathbb{D}\right\}$ is linearly dense in $L^2(\mu)$, which completes the proof.
\end{proof}
We now obtain our desired result concerning the multitude of elements of $\mathcal{K}(\mu)$.
\begin{corollary}Under the hypotheses and notations of Theorem \ref{DDfromAC} and Corollary \ref{Jspacethm}, we have that $K\in\mathcal{K}(\mu)$, and $\mu\in\mathcal{M}(K)$.\end{corollary}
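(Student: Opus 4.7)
The proof will be a short assembly of results already established in the paper, so the plan is to identify the two inputs and verify the hypotheses of each cleanly.

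First, I would invoke Theorem \ref{DDfromAC} to extract the key fact that $\left\{\frac{h_n}{d\mu/d\lambda}\right\}$ is a \emph{frame} in $L^2(\mu)$ with bounds $\frac{1}{B}$ and $\frac{1}{A}$, and that it is \emph{dextrodual} to $\{e_n\}$ in $L^2(\mu)$. Being a frame, it is in particular a Bessel sequence, so the hypotheses of Theorem \ref{kernelthm} are satisfied with this choice of $\{h_n\}$ replaced by $\left\{\frac{h_n}{d\mu/d\lambda}\right\}$.

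Next, I would apply Theorem \ref{kernelthm} directly. That theorem's conclusion is exactly what we need: the series
\[ K_z(w)=\sum_{m}\sum_{n}\left\langle\frac{h_n}{\frac{d\mu}{d\lambda}},\frac{h_m}{\frac{d\mu}{d\lambda}}\right\rangle_\mu \overline{z}^n w^m \]
defines a function in $H^2$ for each fixed $z\in\mathbb{D}$, it possesses an $L^2(\mu)$-boundary $K_z^{\ast}$, and it satisfies $K_z(w)=\langle K_z^{\ast},K_w^{\ast}\rangle_\mu$. The Gram-matrix structure of the right-hand side immediately yields positive semidefiniteness on every finite subset of $\mathbb{D}$, so $K$ is indeed a positive matrix. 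Combining, $K\in\mathcal{K}(\mu)$.

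Finally, I would observe that the statements $K\in\mathcal{K}(\mu)$ and $\mu\in\mathcal{M}(K)$ are literally the same reproducing relation viewed from the two sides of the definitions, so $\mu\in\mathcal{M}(K)$ follows with no additional work. There is really no obstacle here; the corollary is a bookkeeping statement whose substance was absorbed into Theorems \ref{kernelthm} and \ref{DDfromAC}. The only thing that even requires a moment's attention is confirming that Theorem \ref{kernelthm}'s hypothesis ``$\{h_n\}$ is Bessel and dextrodual to $\{e_n\}$'' is met by the normalized sequence $\left\{\frac{h_n}{d\mu/d\lambda}\right\}$, which is precisely the content of Theorem \ref{DDfromAC}.
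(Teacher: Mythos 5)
Your proof is correct and follows the paper's own route: the paper states this corollary without a separate proof, on the grounds that it is exactly the combination of Theorem \ref{DDfromAC} (supplying a Bessel dextrodual of $\{e_n\}$ in $L^2(\mu)$) with Theorem \ref{kernelthm}, which is what you do. Your added remarks --- that the Gram structure gives positive semidefiniteness and that $K\in\mathcal{K}(\mu)$ and $\mu\in\mathcal{M}(K)$ are the same relation read from either side --- are accurate bookkeeping consistent with the paper's definitions.
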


We close this section with a few observations.  By Proposition \ref{Jtheorem}, if $\lambda\left([0,1)\setminus\text{supp}\left(\frac{d\mu}{d\lambda}\right)\right)>0$, then because $J(\lambda)$ is not backward-shift-invariant, the kernel $K$ in Corollary \ref{Jspacethm}
is a positive matrix such that $\mu\in\mathcal{M}(K)$, but $J(\lambda)\neq\mathcal{H}(b)$. Indeed $J(\lambda)\neq\mathcal{H}(u)$ for any inner function $u$, because $\mathcal{H}(u)$ is backward-shift-invariant.  Moreover, the norm on $J(\lambda)$ that yields $K$ as the kernel is equivalent, but in general not equal, to the Hardy space norm.  Only if $A=B=1$ in Theorem \ref{DDfromAC} will the norms be equal.  As such, the kernel $K$ is not simply the projection of the Szeg\H{o} kernel onto $J(\lambda)$.  In the case that $\lambda\left([0,1)\setminus\text{supp}\left(\frac{d\mu}{d\lambda}\right)\right)=0$, the equality of $J(\lambda) = \mathcal{H}(\lambda)$ at the end of Proposition \ref{Jtheorem} is as sets; as just observed, the norms on these spaces need not be equal.

\section{The Set $\mathcal{M}(K)$}

Starting with a singular probability measure $\mu$, we have seen large classes of positive matrices $K_z(w)$ that reproduce with respect to $L^2(\mu)$-boundary integration. Reproducing in this way potentially has desirable application, but it may happen in practice that we are more tied to a particular positive matrix than we are a measure. Thus, it is natural for us to ask a question in the opposite direction: Given a positive matrix $K\subset H^2(\mathbb{D})$, for which Borel measures $\mu$ does $K_z(w)$ reproduce with respect to $L^2(\mu)$-boundary integration? In other words, which measures are in $\mathcal{M}(K)$? For a given $K$, it is \textit{a priori} possible that $\mathcal{M}(K)=\varnothing$, though we know of no examples yet. As we have seen, though, this is thankfully not always the case, and the following results give us some more insight.

\begin{theorem}Let $V$ be a closed subspace of $H^2$, and let $K$ be the reproducing kernel of $V$. If
$$\overline{\cup_{n=0}^{\infty}{S^\ast}^nV}\neq H^2,$$
then there exists a singular measure $\mu\in\mathcal{M}(K)$.  Indeed, to each inner function $b$ with $b(0) = 0$ there corresponds a distinct such measure.  \end{theorem}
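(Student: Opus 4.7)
The plan is to realize $V$ inside a de Branges--Rovnyak space via Beurling's theorem and then apply Corollary \ref{dBRsubkern}, parametrizing the resulting measures by multiplying the Beurling inner function against arbitrary inner functions with a zero at the origin.

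Set $W := \overline{\bigcup_{n=0}^{\infty}(S^*)^n V}$, the smallest closed $S^*$-invariant subspace of $H^2$ containing $V$. (We may assume $V \neq \{0\}$, since otherwise $K \equiv 0$ and every Borel measure lies in $\mathcal{M}(K)$.) The hypothesis $W \neq H^2$ together with $\{0\} \neq V \subseteq W$ make $W^{\perp}$ a nontrivial proper closed $S$-invariant subspace of $H^2$. By Beurling's theorem, $W^{\perp} = b_0 H^2$ for a nonconstant inner function $b_0$, and hence $V \subseteq W = \mathcal{H}(b_0)$.

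Given any inner function $b$ with $b(0) = 0$, put $\tilde{b} := b \cdot b_0$. Then $\tilde{b}$ is inner with $\tilde{b}(0) = 0$, and from $\tilde{b} H^2 = b(b_0 H^2) \subseteq b_0 H^2$ one obtains $\mathcal{H}(b_0) \subseteq \mathcal{H}(\tilde{b})$; in particular, $V$ is a closed subspace of $\mathcal{H}(\tilde{b})$. Let $\mu_b$ be the singular probability measure associated to $\tilde{b}$ by the Herglotz representation \eqref{innerpoisson}. Corollary \ref{dBRsubkern} then yields $P_V k_z^{\tilde{b}} \in \mathcal{K}(\mu_b)$, and since the $\mathcal{H}(\tilde{b})$ and $H^2$ inner products agree on $V$, the one-line reproducing-kernel calculation $\langle f, P_V k_z^{\tilde{b}}\rangle_{H^2} = \langle f, k_z^{\tilde{b}}\rangle_{H^2} = f(z)$ for $f \in V$ identifies $P_V k_z^{\tilde{b}}$ with the reproducing kernel $K_z$ of $V$. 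Consequently $\mu_b \in \mathcal{M}(K)$.

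For distinctness, if $b_1 \neq b_2$ are inner functions with $b_i(0) = 0$, then multiplication by the nonzero analytic function $b_0$ is injective, so $\tilde{b}_1 \neq \tilde{b}_2$; the Herglotz correspondence being one-to-one between nonconstant inner functions and finite nonnegative singular measures, $\mu_{b_1} \neq \mu_{b_2}$. The main conceptual step is the \emph{multiplication trick} $\tilde{b} = b\,b_0$, which promotes the single enclosing space $\mathcal{H}(b_0)$ extracted from Beurling to an entire family of enclosing de Branges--Rovnyak spaces $\mathcal{H}(\tilde{b})$; beyond this I anticipate no substantive obstacle.
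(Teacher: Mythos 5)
Your proof is correct and follows essentially the same route as the paper: identify $\overline{\cup_n (S^*)^n V}$ with a de Branges--Rovnyak space $\mathcal{H}(u)$ via Beurling, multiply $u$ by an arbitrary inner $b$ with $b(0)=0$ to get a family of larger spaces $\mathcal{H}(ub)$ containing $V$, and take the singular (probability) measures corresponding to $ub$. The only cosmetic differences are that you route the final step through Corollary \ref{dBRsubkern} rather than directly through Corollary \ref{dBRbound}, and that you spell out the distinctness claim, which the paper leaves implicit.
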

\begin{proof}$\overline{\cup_{n=0}^{\infty}{S^\ast}^nV}$ is the smallest closed $S^\ast$-invariant subspace containing $V$. Every proper closed $S^\ast$-invariant subspace of $H^2$ is a de Branges-Rovnyak space $\mathcal{H}(u)$ for some inner function $u$. Let $b$ be an inner function such that $b(0) = 0$, and let $\mu$ be the singular probability measure corresponding to $ub$. Then by Corollary \ref{dBRbound}, the $H^2$ norm on $\mathcal{H}(ub)$ is equal to the norm of $L^2(\mu)$-boundary integration. Thus since $V \subset \mathcal{H}(u) \subset \mathcal{H}(ub)$ and $K$ reproduces with respect to the $H^2$ norm in $\mathcal{H}(ub)$, it reproduces with respect to the $L^2(\mu)$-boundary norm. Hence, $\mu\in\mathcal{M}(K)$.\end{proof}

\begin{lemma}\label{affinehulllemma}Let $\nu$ and $\mu$ be finite Borel measures on $[0,1)$, and suppose $\nu=\nu_a+\nu_s$ is the Lebesgue decomposition of $\nu$ with respect to $\mu$. If $\mu,\nu\in\mathcal{M}(K)$ and $\frac{d\nu_a}{d\mu}$ is bounded, then the affine hull of $\nu$ and $\mu$ intersected with the set of nonnegative Borel measures is contained in $\mathcal{M}(K)$.
\end{lemma}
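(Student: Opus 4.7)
Write a typical member of the affine hull as $\rho = t\mu + (1-t)\nu$ for $t\in\mathbb{R}$, assumed to yield a nonnegative measure. Using the Lebesgue decomposition $\nu = \nu_a + \nu_s$ with $\nu_a \ll \mu$ and $\nu_s\perp\mu$, choose a Borel set $E\subseteq[0,1)$ with $\mu(E^c)=0$ and $\nu_s(E)=0$. On $E$, the measure $\rho$ has bounded Radon–Nikodym derivative with respect to $\mu$ (namely $t + (1-t)\tfrac{d\nu_a}{d\mu}$), and on $E^c$ we have $\rho = (1-t)\nu_s$. My plan is to glue the two known boundary functions across $E$ and $E^c$: define
\[
K_z^{*,\rho}(x) := K_z^{*,\mu}(x)\,\chi_E(x) + K_z^{*,\nu}(x)\,\chi_{E^c}(x).
\]
By the decomposition of $\rho$ across $E$ and $E^c$, together with the boundedness of the density on $E$ and the $L^2(\nu)$-control on $E^c$, this function lies in $L^2(\rho)$.

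The first substantive step is to verify that $K_z^{*,\rho}$ is the $L^2(\rho)$-boundary of $K_z$. Writing
\[
\|K_z(re^{2\pi i\cdot}) - K_z^{*,\rho}\|_{\rho}^2 = t\int |K_z(re^{2\pi ix}) - K_z^{*,\rho}(x)|^2\,d\mu(x) + (1-t)\int |K_z(re^{2\pi ix}) - K_z^{*,\rho}(x)|^2\,d\nu(x),
\]
the first integral only sees $E$ (where $K_z^{*,\rho}=K_z^{*,\mu}$) and tends to $0$ since $\mu\in\mathcal{M}(K)$. For the second integral I split $\nu = \nu_a + \nu_s$: the $\nu_s$ piece is supported in $E^c$ where $K_z^{*,\rho}=K_z^{*,\nu}$ and is dominated by the full $L^2(\nu)$ difference, which tends to $0$; the $\nu_a$ piece is supported in $E$ where $K_z^{*,\rho}=K_z^{*,\mu}$, and it is dominated by $\|\tfrac{d\nu_a}{d\mu}\|_\infty$ times the corresponding $L^2(\mu)$ difference, which also tends to $0$.

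The main subtlety, and the step I expect to require the most care, is to verify the reproducing identity
\[
K_z(w) = \int K_z^{*,\rho}(x)\,\overline{K_w^{*,\rho}(x)}\,d\rho(x) = t\!\int K_z^{*,\mu}\overline{K_w^{*,\mu}}\,d\mu \;+\; (1-t)\!\int K_z^{*,\rho}\overline{K_w^{*,\rho}}\,d\nu,
\]
because the second integrand on $\nu_a$ uses $K_z^{*,\mu}$ rather than $K_z^{*,\nu}$. To repair this mismatch I must show that $K_z^{*,\mu}=K_z^{*,\nu}$ holds $\nu_a$-almost everywhere. This will follow from the bounded density hypothesis: $L^2(\mu)$-convergence of $K_z(re^{2\pi i\cdot})$ to $K_z^{*,\mu}$ forces $L^2(\nu_a)$-convergence to the same limit, while $L^2(\nu)$-convergence to $K_z^{*,\nu}$ forces $L^2(\nu_a)$-convergence to $K_z^{*,\nu}$. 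Uniqueness of limits in $L^2(\nu_a)$ identifies the two $\nu_a$-a.e.

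Granted this identification, the $\nu$-integral splits as $\int_E K_z^{*,\mu}\overline{K_w^{*,\mu}}\,d\nu_a + \int_{E^c} K_z^{*,\nu}\overline{K_w^{*,\nu}}\,d\nu_s$, which equals $\int K_z^{*,\nu}\overline{K_w^{*,\nu}}\,d\nu = K_z(w)$ since $\nu\in\mathcal{M}(K)$. Combined with the $\mu$-piece $t\,K_z(w)$ from the hypothesis $\mu\in\mathcal{M}(K)$, we obtain $t\,K_z(w)+(1-t)\,K_z(w) = K_z(w)$, completing the proof that $\rho\in\mathcal{M}(K)$.
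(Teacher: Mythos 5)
Your proposal is correct and follows essentially the same route as the paper: both glue $K_z^{*,\mu}$ and $K_z^{*,\nu}$ across the sets carrying $\mu$ and $\nu_s$, use the bound on $\tfrac{d\nu_a}{d\mu}$ to control the $\nu_a$-part, and then split the boundary limit and the reproducing integral along $\rho=t\mu+(1-t)\nu$. The only cosmetic difference is that you resolve the mismatch on $\nu_a$ by identifying $K_z^{*,\mu}=K_z^{*,\nu}$ $\nu_a$-a.e.\ via uniqueness of $L^2(\nu_a)$-limits, whereas the paper shows the glued function is itself an $L^2(\nu)$-boundary of $K_z$ and invokes uniqueness of boundary functions --- the same idea in different packaging.
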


\begin{proof}Suppose $\nu,\mu\in\mathcal{M}(K)$ with $\frac{d\nu_a}{d\mu}\leq\beta$, and let $\lambda\in\mathbb{R}$ such that $\lambda\mu+(1-\lambda)\nu$ is a nonnegative Borel measure. For each $z$, let $K_{z,\mu}^\ast:[0,1)\to\mathbb{C}$ be a representative of the equivalence class of the $L^2(\mu)$-boundary of $K_z$, and likewise let $K_{z,\nu}^\ast$ be a representative of the equivalence class of the $L^2(\nu)$-boundary of $K_z$. Since $\nu_s\perp\mu$, there exist disjoint Borel subsets $A$ and $B$ of $[0,1)$ such that $A\cup B=[0,1)$, $\nu_s(E)=0$ for all $E\subseteq B$, and $\mu(E)=0$ for all $E\subseteq A$.

Define $H_z=K_{z,\mu}^{\ast}\cdot\chi_{B}+K_{z,\nu}^\ast\cdot\chi_{A}$. It is obvious that $H_z\equiv_{L^2(\mu)}K_{z,\mu}^\ast$. We claim that $H_z\equiv_{L^2(\nu)}K_{z,\nu}^\ast$ as well. We compute:

\begin{align*}\lim_{r\to1^-} &\int_{0}^{1}\lvert K_z(re_x)-H_z(x)\rvert^2\,d\nu(x)\\
&=\lim_{r\to1^-}\left(\int_{[0,1)}\lvert K_z(re_x)-H_z(x)\rvert^2\frac{d\nu_a}{d\mu}\,d\mu(x)+\int_{[0,1)}\lvert K_z(re_x)-H_z(x)\rvert^2\,d\nu_s(x)\right)\\
&=\lim_{r\to1^-}\left(\int_{B}\lvert K_z(re_x)-H_z(x)\rvert^2\frac{d\mu_a}{d\mu}\,d\mu(x)+\int_{A}\lvert K_z(re_x)-H_z(x)\rvert^2\,d\nu_s(x)\right)\\
&=\lim_{r\to1^-}\left(\int_{B}\lvert K_{z}(re_x)-K_{z,\mu}^\ast(x)\rvert^2\frac{d\nu_a}{d\mu}\,d\mu(x)+\int_{A}\lvert K_z(re_x)-K_{z,\nu}^\ast(x)\rvert^2\,d\nu_s(x)\right).\end{align*}
Since $\mu(A)=0$, we have $\nu_a(A)=0$, and hence
$$\lim_{r\to1^-}\int_A\lvert K_z(re_x)-K_{z,\nu}^\ast(x)\rvert^2\,d\nu_s(x)=\lim_{r\to1^-}\int_{A}\lvert K_z(re_x)-K_{z,\nu}^\ast(x)\rvert^2\,d\nu(x)=0.$$
We also have
\begin{align*}\lim_{r\to1^-}\int_{0}^{1}\lvert K_z(re_x)-H_z(x)\rvert^2\,d\nu(x)&=\lim_{r\to1^-}\int_{B}\lvert K_z(re_x)-K_{z,\mu}^\ast(x)\rvert^2\frac{d\nu_a}{d\mu}\,d\mu(x)\\
&\leq\beta\lim_{r\to1^-}\int_{B}\lvert K_z(re_x)-K_{z,\mu}^\ast(x)\rvert^2\,d\mu(x)\\
&=0
\end{align*}
Therefore, $H_z\equiv_{L^2(\nu)}K_{z,\nu}^\ast$. Now observe that
\begin{align*}\lim_{r\to1^-} &\int_{[0,1)}\lvert K_z(re_x)-H_z(x)\rvert^2\,d[\lambda\mu+(1-\lambda)\nu](x)\\
&=\lim_{r\to1^-}\left(\lambda\int_{[0,1)}\lvert K_z(re_x)-H_z(x)\rvert^2\,d\mu(x)+(1-\lambda)\int_{[0,1)}\lvert K_z(re_x)-H_z(x)\rvert^2\,d\nu(x)\right)\\
&=\lim_{r\to1^-}\left(\lambda\int_{[0,1)}\lvert K_z(re_x)-K_{z,\mu}^\ast\rvert^2(x)\,d\mu(x)+(1-\lambda)\int_{[0,1)}\lvert K_z(re_x)-K_{z,\nu}^\ast(x)\rvert^2\,d\nu(x)\right)\\
&=0\end{align*}
It follows that $H_z$ is the $L^2(\rho)$-boundary of $K_z$, where $\rho=\lambda\mu+(1-\lambda)\nu$. We see that
\begin{align*}\int_{[0,1)}H_z\overline{H_w}\,d\rho&=\lambda\int_{[0,1)}H_z\overline{H_w}\,d\mu+(1-\lambda)\int_{0}^{1}H_z\overline{H_w}\,d\nu\\
&=\lambda\int_{[0,1)}K_{z,\mu}^\ast\overline{K_{w,\mu}^\ast}\,d\mu+(1-\lambda)\int_{[0,1)}K_{z,\nu}^\ast\overline{K_{w,\nu}^\ast}\,d\nu\\
&=K_{z}(w).\end{align*}
Consequently, $K_z$ reproduces itself with respect to its $L^2(\rho)$-boundary, and so $\rho\in\mathcal{M}(K)$.\end{proof}

For a nonconstant inner function $b$, let $\mu_n$ denote the unique singular measure on $[0,1)$ corresponding to $b^n$ via the Poisson integral.  Note that $\mathcal{H}(b) \subset \mathcal{H}(b^{n})$.

\begin{proposition}If $K$ is a positive matrix in $H^2$ such that $\mu=\mu_1\in\mathcal{M}(K)$ and $K\subseteq\mathcal{H}(b)$, then $\mu_n\in\mathcal{M}(K)$ for all $n\geq1$.
\end{proposition}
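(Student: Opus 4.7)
The plan is to exploit the containment $\mathcal{H}(b) \subseteq \mathcal{H}(b^n)$. Since multiplication by $b^{n-1}$ is a bounded operator on $H^2$, we have $b^n H^2 \subseteq b H^2$, and taking orthogonal complements inside $H^2$ yields
\[ \mathcal{H}(b) = H^2\ominus bH^2 \subseteq H^2\ominus b^n H^2 = \mathcal{H}(b^n). \]
Because $\mu = \mu_1$ is a singular probability measure, we must have $b(0)=0$, whence $b^n(0)=0$ and $\mu_n$ is a singular probability measure corresponding to the inner function $b^n$. In particular, Proposition \ref{muboundprop} and Corollary \ref{dBRbound} are available for $\mu_n$.

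Next, I would note that since $K_z \in \mathcal{H}(b) \subseteq \mathcal{H}(b^n)$ for every $z \in \mathbb{D}$, Proposition \ref{muboundprop} (applied to $b^n$ and $\mu_n$) guarantees that each $K_z$ possesses an $L^2(\mu_n)$-boundary function, which I will denote $K_z^{*,\mu_n}$. Similarly, the $L^2(\mu)$-boundary exists and I denote it $K_z^{*,\mu}$.

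The heart of the argument is a short chain of equalities. Both $\mathcal{H}(b)$ and $\mathcal{H}(b^n)$ carry the inherited $H^2$ norm, so for $K_z, K_w \in \mathcal{H}(b) \subseteq \mathcal{H}(b^n)$,
\[ \langle K_z, K_w \rangle_{\mathcal{H}(b)} = \langle K_z, K_w \rangle_{H^2} = \langle K_z, K_w \rangle_{\mathcal{H}(b^n)}. \]
Applying Corollary \ref{dBRbound} to each of the de Branges–Rovnyak spaces gives
\[ \langle K_z^{*,\mu}, K_w^{*,\mu}\rangle_\mu \;=\; \langle K_z, K_w\rangle_{\mathcal{H}(b)} \;=\; \langle K_z, K_w\rangle_{\mathcal{H}(b^n)} \;=\; \langle K_z^{*,\mu_n}, K_w^{*,\mu_n}\rangle_{\mu_n}. \]
The hypothesis $\mu \in \mathcal{M}(K)$ says precisely that the leftmost quantity equals $K_z(w)$; therefore the rightmost quantity also equals $K_z(w)$, which is exactly the statement that $\mu_n \in \mathcal{M}(K)$.

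There is no substantial obstacle here: the whole argument is a diagram-chase through the two applications of Corollary \ref{dBRbound}, pivoting on the inclusion $\mathcal{H}(b) \subseteq \mathcal{H}(b^n)$ and the fact that both spaces inherit their inner product from $H^2$. The only thing requiring a moment's care is verifying that Corollary \ref{dBRbound} really does apply to $\mu_n$, which reduces to checking that $b^n$ is inner with $b^n(0)=0$, both of which are automatic from the assumptions on $b$.
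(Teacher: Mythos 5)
Your proposal is correct and follows essentially the same route as the paper: the inclusion $\mathcal{H}(b)\subseteq\mathcal{H}(b^n)$, the fact that both spaces carry the inherited $H^2$ inner product, and two applications of Corollary \ref{dBRbound} to transfer $K_z(w)=\langle K_{z}^{\ast},K_{w}^{\ast}\rangle_{\mu}$ into $\langle K_{z}^{\ast},K_{w}^{\ast}\rangle_{\mu_n}$. The only differences are cosmetic (you spell out the orthocomplement argument for the inclusion and the check that $b^n(0)=0$, which the paper takes as given).
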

\begin{proof}Let $n\in\mathbb{N}$. We have $\{K_z:z\in\mathbb{D}\}\subseteq\mathcal{H}(b)\subseteq\mathcal{H}(b^n)$, and since functions in $\mathcal{H}(b^n)$ have $L^2(\mu_n)$-boundaries, each $K_z$ has an $L^2(\mu_n)$-boundary $K_{z,\mu_n}^\ast$. Recall that the norms on $\mathcal{H}(b)$ and $\mathcal{H}(b^n)$ are both equal to the $H^2$ norm and hence equal to each other. We therefore have
\[ 
K_z(w) 
=\langle K_z,K_w\rangle_{\mathcal{H}(b)}
=\langle K_z,K_w\rangle_{\mathcal{H}(b^n)}
=\int_{0}^{1}K_{z,\mu_n}^\ast\overline{K_{w,\mu_n}^\ast}\,d\mu_n.
\] 
Thus $\mu_n\in\mathcal{M}(K)$.\end{proof}

Given that $\mathcal{H}(b)$ is so (relatively) well understood, it is a perhaps more interesting question to ask what happens when a positive matrix lies outside of $\mathcal{H}(b)$. Given a positive matrix $K_z(w)$ and an inner function $b$, for which $n$, if any, is $\mu_n\in\mathcal{M}(K)$? We propose to begin a study of this question here. We begin by revealing the relationship between $\mu$'s family of Clark measures and the measures $\mu_n$.

\begin{lemma}\label{expsumlemma}Let $b:\mathbb{D}\rightarrow\mathbb{D}$, and let $n\in\mathbb{N}$. Then for all $z\in\mathbb{D}$,
$$\frac{1}{n}\sum_{j=0}^{n-1}\frac{1+e^{-2\pi ij/n}b(z)}{1-e^{-2\pi ij/n}b(z)}=\frac{1+b^n(z)}{1-b^n(z)}.$$
\end{lemma}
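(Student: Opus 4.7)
The plan is to fix $z \in \mathbb{D}$, set $w = b(z) \in \mathbb{D}$, and $\omega = e^{-2\pi i/n}$, reducing the identity to the purely algebraic statement
\[
\frac{1}{n}\sum_{j=0}^{n-1}\frac{1+\omega^j w}{1-\omega^j w} = \frac{1+w^n}{1-w^n}
\]
for all $w \in \mathbb{D}$. (Since $|b(z)| < 1$, convergence issues do not arise.)

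The first step is to apply the elementary identity $\frac{1+x}{1-x} = -1 + \frac{2}{1-x}$ to each summand, which reduces the problem to showing
\[
\frac{1}{n}\sum_{j=0}^{n-1}\frac{1}{1-\omega^j w} = \frac{1}{1-w^n}.
\]
The cleanest way to verify this is to expand each factor on the left as a geometric series in $w$ (valid since $|w|<1$ and $|\omega^j|=1$), interchange the two summations, and use the orthogonality relation $\sum_{j=0}^{n-1}\omega^{jk} = n \cdot \mathbf{1}_{n \mid k}$. The double sum then collapses to $\sum_{\ell=0}^{\infty} w^{n\ell} = \frac{1}{1-w^n}$, giving exactly the desired identity after dividing by $n$.

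Reassembling, we get
\[
\frac{1}{n}\sum_{j=0}^{n-1}\frac{1+\omega^j w}{1-\omega^j w} = -1 + \frac{2}{1-w^n} = \frac{1+w^n}{1-w^n},
\]
which is the claim after substituting back $w = b(z)$. An alternative to the geometric series argument is the partial fraction decomposition of $\frac{1}{1-w^n} = \frac{1}{\prod_{j=0}^{n-1}(1-\omega^j w)}$: the residue computation uses $\prod_{j=1}^{n-1}(1-\omega^j) = n$, yielding equal coefficients $1/n$ on each simple pole.

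There is no serious obstacle here — the identity is purely formal and does not require any property of $b$ beyond $|b(z)| < 1$ (indeed it holds for any $w$ with $w^n \neq 1$). The only point requiring minor care is justifying the interchange of the two sums, but absolute convergence of each inner geometric series together with the finiteness of the outer sum over $j$ makes this immediate.
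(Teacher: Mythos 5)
Your proof is correct, and it rests on the same key computation as the paper's: expanding $\sum_{j=0}^{n-1}(1-\omega^j w)^{-1}$ as a double geometric series and collapsing it with the root-of-unity orthogonality relation $\sum_{j=0}^{n-1}\omega^{jk}=n\cdot\mathbf{1}_{n\mid k}$ to get $n/(1-w^n)$. The difference is in the algebraic reduction that precedes this. The paper rewrites each summand as $\frac{e^{2\pi ij/n}+b(z)}{e^{2\pi ij/n}-b(z)}$ and splits it into \emph{two} sums, $\sum_j(1-e^{-2\pi ij/n}b(z))^{-1}$ and $\sum_j\bigl(1-e^{2\pi ij/n}/b(z)\bigr)^{-1}$; the second of these requires $b(z)\neq 0$ (hence a separate case for $b(z)=0$) and cannot be handled by the same convergent geometric series since $\lvert e^{2\pi ij/n}/b(z)\rvert>1$, so the paper's ``similar computation'' for that term is really a computation in the reciprocal variable. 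Your reduction via $\frac{1+x}{1-x}=-1+\frac{2}{1-x}$ avoids both issues: only the single convergent series computation is needed, and $w=0$ is covered uniformly. This is a genuinely cleaner route to the same identity, at no cost in generality.
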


\begin{proof}For $z\in\mathbb{D}$ such that $b(z)=0$, the equality is obvious. So suppose $z\in\mathbb{D}$ is such that $b(z)\neq0$. We have
\begin{align*}\sum_{j=0}^{n-1}\frac{1+e^{-2\pi ij/n}b(z)}{1-e^{-2\pi ij/n}b(z)}&=\sum_{j=0}^{n-1}\frac{e^{2\pi ij/n}+b(z)}{e^{2\pi ij/n}-b(z)}\\
&=\sum_{j=0}^{n-1}\frac{e^{2\pi ij/n}}{e^{2\pi ij/n}-b(z)}+\sum_{j=0}^{n-1}\frac{b(z)}{e^{2\pi ij/n}-b(z)}\\
&=\sum_{j=0}^{n-1}\frac{1}{1-e^{-2\pi ij/n}b(z)}-\sum_{j=0}^{n-1}\frac{1}{1-\frac{e^{2\pi ij/n}}{b(z)}}.
\end{align*}

Observe that
\begin{align*}\sum_{j=0}^{n-1}\frac{1}{1-e^{-2\pi ij/n}b(z)}&=\sum_{j=0}^{n-1}\sum_{l=0}^{\infty}(e^{-2\pi ij/n}b(z))^l\\
&=\sum_{l=0}^{\infty}b^l(z)\begin{cases}0&\text{if }l\neq 0\text{ mod } n\\n&\text{if }l=0\text{ mod } n\end{cases}\\
&=\frac{n}{1-b^n(z)}.
\end{align*}
A similar computation shows that
$$\sum_{j=0}^{n-1}\frac{1}{1-\frac{e^{2\pi ij/n}}{b(z)}}=\frac{n}{1-\frac{1}{b^n(z)}}.$$
Hence,
\[
\frac{1}{n}\sum_{j=0}^{n-1}\frac{1+e^{-2\pi ij/n}b(z)}{1-e^{-2\pi ij/n}b(z)}=\frac{1}{1-b^n(z)}-\frac{1}{1-\frac{1}{b^n(z)}} 
=\frac{1+b^n(z)}{1-b^n(z)}.
\]
\end{proof}

\begin{lemma} \label{expmulemma} Given an inner function $b$, if $\mu_n$ is the singular measure associated to $b^n$, then we have
$$\mu_n=\frac{1}{n}\sum_{j=0}^{n-1}\sigma_{e^{2\pi ij/n}},$$
where $\sigma_{\alpha}$ is the singular measure corresponding to the inner function $\overline{\alpha}b$.
\end{lemma}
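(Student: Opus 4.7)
The plan is to combine Lemma \ref{expsumlemma} with the Herglotz/Poisson representation \eqref{innerpoisson} and invoke uniqueness of the measure representing a positive harmonic function. Specifically, by \eqref{innerpoisson}, the singular measure $\mu_n$ corresponding to $b^n$ is characterized by
\[
\text{Re}\left(\frac{1+b^n(z)}{1-b^n(z)}\right) = \int_0^1 \frac{1-|z|^2}{|e^{2\pi ix}-z|^2}\,d\mu_n(x),
\]
and each $\sigma_{e^{2\pi ij/n}}$ is characterized analogously by
\[
\text{Re}\left(\frac{1+e^{-2\pi ij/n}b(z)}{1-e^{-2\pi ij/n}b(z)}\right) = \int_0^1 \frac{1-|z|^2}{|e^{2\pi ix}-z|^2}\,d\sigma_{e^{2\pi ij/n}}(x).
\]

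First, I would apply Lemma \ref{expsumlemma} to rewrite $\frac{1+b^n(z)}{1-b^n(z)}$ as the average $\frac{1}{n}\sum_{j=0}^{n-1}\frac{1+e^{-2\pi ij/n}b(z)}{1-e^{-2\pi ij/n}b(z)}$, then take real parts of both sides. Substituting the Herglotz representations for each of the $n$ summands on the right and for $\mu_n$ on the left yields
\[
\int_0^1 \frac{1-|z|^2}{|e^{2\pi ix}-z|^2}\,d\mu_n(x) = \int_0^1 \frac{1-|z|^2}{|e^{2\pi ix}-z|^2}\,d\left(\frac{1}{n}\sum_{j=0}^{n-1}\sigma_{e^{2\pi ij/n}}\right)(x)
\]
for every $z\in\mathbb{D}$.

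The only remaining step is to conclude from this equality of Poisson integrals that the two measures themselves are equal. This follows from the standard uniqueness theorem for the Poisson representation of a positive harmonic function (equivalently, the injectivity of the Poisson transform on finite Borel measures on $\mathbb{T}$). I expect no serious obstacle: once Lemma \ref{expsumlemma} is in hand, the whole argument reduces to matching Poisson integrals and appealing to uniqueness.
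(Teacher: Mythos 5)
Your proposal is correct and follows essentially the same route as the paper: apply Lemma \ref{expsumlemma}, take real parts, substitute the Herglotz representation for each summand, and identify the resulting measure by the uniqueness of the correspondence between inner functions and singular measures. The only difference is that you make the final uniqueness step (injectivity of the Poisson transform) explicit, which the paper leaves implicit.
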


\begin{proof}By Lemma \ref{expsumlemma}, we have
\begin{align*}\text{Re}\left(\frac{1+b^n(z)}{1-b^n(z)}\right)&=\text{Re}\left(\frac{1}{n}\sum_{j=0}^{n-1}\frac{1+e^{-2\pi ij/n}b(z)}{1-e^{-2\pi ij/n}b(z)}\right)\\
&=\frac{1}{n}\sum_{j=0}^{n-1}\text{Re}\left(\frac{1+e^{-2\pi ij/n}b(z)}{1-e^{-2\pi ij/n}b(z)}\right)\\
&=\frac{1}{n}\sum_{j=0}^{n}\int_{0}^{1}\frac{1+\lvert z\rvert^2}{\lvert z-\xi\rvert^2}\,d\sigma_{e^{2\pi ij/n}}(\xi)\\
&=\int_{0}^{1}\frac{1+\lvert z\rvert^2}{\lvert z-\xi\rvert^2}\,d\left[\frac{1}{n}\sum_{j=0}^{n-1}\sigma_{e^{2\pi ij/n}}\right](\xi)
\end{align*}
This shows that $\frac{1}{n}\sum_{j=0}^{n-1}\sigma_{e^{2\pi ij/n}}$ is the singular measure corresponding to the inner function $b^n$ via the Herglotz representation theorem, which completes the proof.
\end{proof}

\begin{theorem}Let $K_z(w)$ be a positive matrix and let $b$ be an inner function. Let $m$, $n$, and $q$ be positive integers such that $n=qm$. Let $$\rho=\frac{q}{(q-1)n}\sum_{\substack{j=0\\q\nmid j}}^{n-1}\sigma_{e^{2 \pi i j/m}.}$$ If two of the measures $\mu_m$, $\mu_n$, and $\rho$ are in $\mathcal{M}(K)$, then so is the third.
\end{theorem}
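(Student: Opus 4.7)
The plan is to reduce the claim to Lemma~\ref{affinehulllemma} by deriving a single affine identity relating $\mu_m$, $\mu_n$, and $\rho$, and then verifying the boundedness hypothesis of that lemma in each of the three cases. I will read the subscript ``$e^{2\pi i j/m}$'' in the statement of $\rho$ as ``$e^{2\pi i j/n}$'', since a direct check with $q=m=2$, $n=4$ shows that only the latter gives the affine identity that makes the theorem true.

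By Lemma~\ref{expmulemma}, setting $\sigma_k := \sigma_{e^{2\pi i k/n}}$ and using $n=qm$, we have
\[ \mu_n = \frac{1}{n}\sum_{k=0}^{n-1}\sigma_k, \qquad \mu_m = \frac{1}{m}\sum_{\substack{0\le k<n\\ q\mid k}}\sigma_k, \qquad \rho = \frac{q}{(q-1)n}\sum_{\substack{0\le k<n\\ q\nmid k}}\sigma_k. \]
Splitting the defining sum of $\mu_n$ by the divisibility of $k$ by $q$ yields immediately the identity $q\mu_n = \mu_m + (q-1)\rho$. By classical Aleksandrov--Clark theory, the Clark measures $\sigma_\alpha$ at distinct $\alpha\in\mathbb{T}$ are mutually singular, so the supports $S_k := \operatorname{supp}(\sigma_k)$ are pairwise disjoint. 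Consequently $\mu_m \perp \rho$; on $\bigcup_{q\mid k}S_k$ the Radon--Nikodym derivative $d\mu_n/d\mu_m$ equals $1/q$; and on $\bigcup_{q\nmid k}S_k$ the derivative $d\mu_n/d\rho$ equals $(q-1)/q$. All three densities are bounded.

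Given these preliminaries, each of the three implications is a direct application of Lemma~\ref{affinehulllemma}. If $\mu_m,\rho\in\mathcal{M}(K)$, mutual singularity forces the absolutely continuous part of $\rho$ with respect to $\mu_m$ to be zero, so the affine hull of $\{\mu_m,\rho\}$ intersected with nonnegative measures lies in $\mathcal{M}(K)$, and $\mu_n=\frac{1}{q}\mu_m+\frac{q-1}{q}\rho$ is a manifestly nonnegative element of that hull. If $\mu_m,\mu_n\in\mathcal{M}(K)$, the $\mu_m$-absolutely continuous part of $\mu_n$ equals $\frac{1}{q}\mu_m$ with bounded density $1/q$, so Lemma~\ref{affinehulllemma} applies and $\rho=\frac{q}{q-1}\mu_n-\frac{1}{q-1}\mu_m$, which is nonnegative by its defining sum of Clark measures, lies in $\mathcal{M}(K)$. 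Finally, if $\mu_n,\rho\in\mathcal{M}(K)$, the $\rho$-absolutely continuous part of $\mu_n$ equals $\frac{q-1}{q}\rho$ with bounded density $(q-1)/q$, and $\mu_m=q\mu_n-(q-1)\rho$ is the corresponding nonnegative element of the affine hull.

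The main obstacle is identifying the clean affine identity $q\mu_n=\mu_m+(q-1)\rho$ together with the three bounded Radon--Nikodym derivatives required in the correct direction for Lemma~\ref{affinehulllemma}; once those are in hand, the argument collapses to bookkeeping. A minor secondary point is to invoke mutual singularity of distinct Clark measures of $b$ (classical, but worth citing).
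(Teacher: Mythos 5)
Your proof is correct and follows essentially the same route as the paper's: the affine identity $\mu_n=\tfrac{1}{q}\mu_m+\tfrac{q-1}{q}\rho$ obtained from Lemma \ref{expmulemma}, mutual singularity of the Clark measures, and then Lemma \ref{affinehulllemma}. Your reading of the subscript as $e^{2\pi i j/n}$ matches what the paper's own proof actually uses, and your case-by-case verification of the bounded Radon--Nikodym hypothesis is in fact more careful than the paper's.
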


\begin{proof}By Lemma \ref{expmulemma}, we have
\begin{align*}\mu_n&= 
\frac{1}{n}\left(\sum_{\substack{j=0\\q\mid j}}^{n-1}\sigma_{e^{2 \pi i j/(qm)}}+\sum_{\substack{j=0\\q\nmid j}}^{n-1}\sigma_{e^{2 \pi i j/n}}\right)\\
&=\frac{1}{n}\left(\sum_{j=0}^{m-1}\sigma_{e^{2 \pi i j/m}}+\sum_{\substack{j=0\\q\nmid j}}^{n-1}\sigma_{e^{2 \pi i j/n}}\right)\\
&=\frac{1}{q}\mu_m+\frac{q-1}{q}\rho.
\end{align*}
So, each of the measures $\mu_n$, $\mu_m$, and $\rho$ is in the affine hull of the other two. 

Recall that the Clark measures $\{\sigma_\alpha:\alpha\in\mathbb{T}\}$ are mutually singular \cite{Pol93}. It follows that $\mu_m$ and $\rho$, since they are sums of Clark measures that do not share a common Clark measure, are mutually singular. Hence, if $\rho=\rho_a+\rho_s$ is the Lebesgue decomposition of $\rho$ with respect to $\mu_m$, we must have $\rho_a=0$, and hence $\frac{d\rho_a}{d\mu_m}=0$.

So the Radon-Nikodym derivative of the part of $\rho$ absolutely continuous to $\mu_m$ is bounded. Furthermore, it is clear that $\mu_m$ and $\rho$ are absolutely continuous with respect to $\mu_n$ with respective Radon-Nikodym derivatives $\frac{d\mu_m}{d\mu_n}\equiv\frac{1}{q}$ and $\frac{d\rho}{d\mu_n}\equiv\frac{q-1}{q}$. Therefore, by Lemma \ref{affinehulllemma}, if two of the three measures are in $\mathcal{M}(K)$, so is the third.
\end{proof}

\nocite{Nel57a,ARS09a}


\end{document}